\item\begin{small}\sffamily}%
\newcommand{\subs}[1]{\ensuremath{_{\textrm{#1}}}}
\newcommand{\mtn}{\mathds N}%
\newcommand{\mtr}{\mathds R}%
\newcommand{\mtc}{\mathds C}%
\newcommand{\mth}{\mathds H}%
\DeclareMathOperator{\spn}{span}
\DeclareMathOperator{\Tr}{Tr}
\DeclareMathOperator{\End}{End}
\DeclarePairedDelimiter\norm{\lVert}{\rVert}
\DeclarePairedDelimiter\abs{\lvert}{\rvert}
\DeclarePairedDelimiter\expect{\langle}{\rangle}
\newcommand{\snab}[1]{\nabla_{\!#1}}%
\let\oldabs\abs
\def\abs{\@ifstar{\oldabs}{\oldabs*}}
\let\oldnorm\norm
\def\norm{\@ifstar{\oldnorm}{\oldnorm*}}
\let\oldexpect\expect
\def\expect{\@ifstar{\oldexpect}{\oldexpect*}}
\theoremstyle{plain}
\newtheorem{lemma}{Lemma}[section]
\newtheorem{corollary}[lemma]{Corollary}
\newtheorem{proposition}[lemma]{Proposition}
\newtheorem{question}[lemma]{Question}
\newtheorem*{ntheorem}{Theorem}
\theoremstyle{remark}
\newtheorem*{remark}{Remark}
\theoremstyle{definition}
\newtheorem{defn}[lemma]{Definition}
\newtheorem{example}[lemma]{Example}
\theoremstyle{plain}
\newtheorem{theorem}{Theorem}
\newcommand{\thefuturetheoreminner}{} 
\NewDocumentEnvironment{futuretheorem}{ m o +b }
 {
  \renewcommand{\thefuturetheoreminner}{\ref{#1}}
  \IfNoValueTF{#2}
   {\futuretheoreminner}
   {\futuretheoreminner[#2]}
  #3
 }
 {
  \endfuturetheoreminner
  \prop_gput:Nnn \g_alevel_future_prop { #1 } { #3 }
  \IfValueT{#2}{ \prop_gput:Nnn \g_alevel_future_prop { #1-attr } { #2 } }
 }
\NewDocumentCommand{\pasttheorem}{m}
 {
  \prop_if_in:NnTF \g_alevel_future_prop { #1-attr }
   {
    \begin{theorem}[\prop_item:Nn \g_alevel_future_prop { #1-attr }]
   }
   {
    \begin{theorem}
   }
  \label{#1}
  \prop_item:Nn \g_alevel_future_prop { #1 }
  \end{theorem}
 }
\newcommand{\defeq}{\vcentcolon=}
\title{Polynomial harmonic morphisms and eigenfamilies on spheres}
\author{Oskar Riedler}
\date{\normalsize \today}
\begin{document}
\maketitle

\begin{abstract}
The eigenfamilies of Gudmundsson and Sakovich \cite{gudmundsson-sakovich-08} can be used to generate harmonic morphisms, proper $r$-harmonic maps, and minimal co-dimension $2$ submanifolds. This article begins by characterising the globally defined eigenfamilies of the sphere $S^m$; they correspond to orthogonal families of homogeneous polynomial harmonic morphisms from $\mtr^{m+1}$ to $\mtc$, all of the same degree. We investigate and construct such families, paying special attention to those that are not congruent to families of holomorphic polynomials. Strong restrictions for families of such polynomials are found in low dimensions, and the pairs of degree $2$ maps that induce an eigenfamily are classified.
\end{abstract} 

\section{Introduction}

Let $(M,g), (N,h)$ be two Riemannian manifolds. A \textit{harmonic morphism} is a smooth map $\varphi:M\to N$ that pulls back germs of harmonic maps on $N$ to germs of harmonic maps on $M$, i.e. for all (locally defined) harmonic maps $f:U\subseteq N\to\mtr$ one has that $f\circ\varphi:\varphi^{-1}(U)\subseteq M\to\mtr$ is harmonic. The theory of harmonic morphisms between Riemannian manifolds begins with the works of Fuglede \cite{fuglede-78} and Ishihara \cite{ishihara-79}, who found a geometric characterisation of the above condition: A map $\varphi:M\to N$ is a harmonic morphism if and only if it is harmonic and weakly horizontally conformal.

The case $\dim(N)=2$ is especially interesting for two reasons. First the level sets $\varphi^{-1}(\{y\})\cap (M\setminus S)$ are then minimal submanifolds of $M\setminus S$ (here $S=\{ x\in M\mid D_x\varphi=0\}$ denotes the singular points of $\varphi$
), and one obtains a codimension $2$ foliation of $M\setminus S$ consisting of minimal submanifolds, c.f. \cite{baird-eells-80}. Secondly the property of $\varphi:M\to N$ being a harmonic morphism is in this case invariant under conformal changes of the metric on $N$. So one can always assume, at least locally, that the codomain is $\mtc$ with its Euclidean metric.

For maps with codomain $\mtc$ the conditions of being harmonic and weakly horizontally conformal can be expressed by simple equations. A map $\varphi:M\to \mtc$ is a harmonic morphism if and only if (see e.g. \cite{baird-wood-book}):
\begin{gather}
\Delta \varphi = 0,\label{eq: hm1}\\
g_\mtc(\nabla \varphi, \nabla\varphi) =0. \label{eq: hm2}
\end{gather}
Here $\nabla\varphi$ is the section of $TM\otimes\mtc$ given by $\nabla\mathrm{Re}(\varphi)+i\nabla\mathrm{Im}(\varphi)$ and $g_\mtc$ denotes the complex bilinear extension of the metric $g$ to $TM\otimes\mtc$.

Equations (\ref{eq: hm1}) and (\ref{eq: hm2}) form and overdetermined system of partial differential equations, so a priori it is not clear if any solutions exist. In fact Baird and Wood \cite{baird-wood-02} have shown that the 3-dimensional homogeneous space $\mathrm{Sol}$ does not even admit any locally defined harmonic morphisms to $\mtc$.

Motivated by the existence problem of harmonic morphisms, Gudmundsson and Sakovich \cite{gudmundsson-sakovich-08} introduced the notion of an \textit{eigenfamily}, which can be used to generate families of harmonic morphisms:

\begin{defn}
Let $\lambda,\mu\in\mtc$. A family $\mathcal F$ of smooth functions from $M$ to $\mtc$ is called a \textit{$(\lambda,\mu)$-eigenfamily} on $M$ if for all $\varphi,\psi\in\mathcal F$:
\begin{gather}
\Delta \varphi = \lambda \varphi,\label{eq: ef1}\\
g_\mtc(\nabla \varphi, \nabla \psi)= \mu\, \varphi\psi. \label{eq: ef2}
\end{gather}
Functions that are elements of some $(\lambda,\mu)$-eigenfamily are called \textit{$(\lambda,\mu)$-eigenfunctions}.
\end{defn}

\begin{remark}
Comparing with equations (\ref{eq: hm1}), (\ref{eq: hm2}) one notes that a $(0,0)$-eigenfunction is the same as a complex valued harmonic morphism.
\end{remark}

\begin{ntheorem}[Gudmundsson-Sakovich, c.f. Theorem 2.5 in \cite{gudmundsson-sakovich-08}]
Let $\mathcal F$ be a $(\lambda,\mu)$-eigenfamily on $M$, then for any $\varphi_1,...,\varphi_k\in\mathcal F$ and homogeneous polynomials $P,Q\in\mtc[z_1,...,z_k]$ of the same degree, the map $\dfrac{P\circ (\varphi_1,...,\varphi_k)}{Q\circ (\varphi_1,...,\varphi_k)}$ is a harmonic morphism on $M\setminus\{x\in M\mid Q(\varphi_1(x),...,\varphi_k(x))=0\}$.
\end{ntheorem}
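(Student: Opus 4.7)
The plan is to reduce the theorem to the verification that for any two homogeneous polynomials $R_1, R_2 \in \mathbb{C}[z_1, \ldots, z_k]$ of the same degree $d$, the pair of functions $\Phi_{R_i} \defeq R_i(\varphi_1, \ldots, \varphi_k)$ again satisfies eigenfamily-type relations with new constants. Once this is established, the harmonic morphism equations for the quotient follow by a direct calculation.

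First I would fix $d = \deg P = \deg Q$ and, for a homogeneous polynomial $R$ of degree $d$, compute $\nabla \Phi_R = \sum_i (\partial_i R)(\varphi) \nabla \varphi_i$ and
\[
\Delta \Phi_R = \sum_i (\partial_i R)(\varphi)\,\Delta \varphi_i + \sum_{i,j} (\partial_i \partial_j R)(\varphi)\, g_\mtc(\nabla \varphi_i, \nabla \varphi_j).
\]
Substituting the eigenfamily relations (\ref{eq: ef1}), (\ref{eq: ef2}) and then invoking Euler's identity in the forms $\sum_i z_i \partial_i R = dR$ and $\sum_{i,j} z_i z_j \partial_i \partial_j R = d(d-1) R$, one gets $\Delta \Phi_R = \tilde{\lambda}\, \Phi_R$ with $\tilde\lambda = d\lambda + d(d-1)\mu$, and similarly
\[
g_\mtc(\nabla \Phi_{R_1}, \nabla \Phi_{R_2}) = \mu \Big(\sum_i \varphi_i (\partial_i R_1)(\varphi)\Big)\Big(\sum_j \varphi_j (\partial_j R_2)(\varphi)\Big) = \tilde\mu\, \Phi_{R_1} \Phi_{R_2},
\]
with $\tilde\mu = d^2 \mu$. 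Thus the collection $\{\Phi_R : R \text{ homogeneous of degree } d\}$ is a $(\tilde\lambda, \tilde\mu)$-eigenfamily.

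With $F \defeq \Phi_P$ and $G \defeq \Phi_Q$ in hand, I would verify (\ref{eq: hm1}) and (\ref{eq: hm2}) for $F/G$ on $M \setminus \{G = 0\}$ by direct manipulation. For (\ref{eq: hm2}): expanding $g_\mtc(\nabla(F/G), \nabla(F/G))$ and using that all three of $g_\mtc(\nabla F, \nabla F)$, $g_\mtc(\nabla F, \nabla G)$, $g_\mtc(\nabla G, \nabla G)$ equal $\tilde\mu$ times $F^2$, $FG$, $G^2$ respectively, the numerator telescopes to zero. For (\ref{eq: hm1}), I would apply the product rule $\Delta(FG^{-1}) = G^{-1}\Delta F + 2 g_\mtc(\nabla F, \nabla G^{-1}) + F\Delta(G^{-1})$, compute $\Delta(G^{-1}) = G^{-1}(2\tilde\mu - \tilde\lambda)$ from $\Delta G = \tilde\lambda G$ and $g_\mtc(\nabla G, \nabla G) = \tilde\mu G^2$, and observe that the $\tilde\lambda$-terms cancel, leaving $2\tilde\mu (F/G) - 2\tilde\mu (F/G) = 0$.

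The main obstacle is really conceptual rather than computational: recognising that the homogeneity of $P$ and $Q$ is exactly what allows Euler's identity to turn the complicated nonlinear combinations of (\ref{eq: ef1}), (\ref{eq: ef2}) back into clean eigenfamily relations of the same form. Once this is seen, both steps of the verification are short algebraic manipulations, and the equality of degrees is also what ensures the final cancellation in $\Delta(F/G)$.
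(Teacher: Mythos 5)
Your proof is correct. The paper does not prove this statement itself (it is quoted from Gudmundsson--Sakovich with a citation), but your argument is essentially the standard one: your first step, that $\{\Phi_R\}$ is a $(d\lambda+d(d-1)\mu,\, d^2\mu)$-eigenfamily, is exactly the paper's remark on powers of eigenfamilies (Lemma A.1 of the cited source, since homogeneous degree-$d$ polynomials are spanned by $d$-fold products), and the remaining verification of (\ref{eq: hm1}) and (\ref{eq: hm2}) for the quotient is the same direct computation used there.
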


In a series of papers \cite{gudmundsson-sakovich-08, gudmundsson-siffert-sobak-22, ghandour-gudmundsson-explicit23} Gudmundsson and co-authors construct eigenfamilies on each of the classical compact symmetric spaces. A duality principle induces eigenfamilies on the non-compact duals of these spaces. In this way they explicitly construct for many symmetric spaces the first known  examples of harmonic morphisms.

In addition to their use in generating harmonic morphisms, it was shown by Gudmundsson and Sobak \cite{gudmundsson-sobak-20} that eigenfunctions can be used to explicitly construct proper $r$-harmonic maps with an open dense domain, and Gudmundsson and Munn \cite{gudmundsson-munn-23} use them to construct minimal submanifolds of co-dimension $2$.

In this article we begin by considering eigenfamilies on the sphere $S^m$ with global domain. It is a simple observation that such families are in bijection with $(0,0)$-eigenfamilies on $\mtr^{m+1}$ consisting of homogeneous polynomials all of the same degree. 

\begin{futuretheorem}{thm: Sn-eigen}
Let $\mathcal F$ be a family of maps from $S^m$ to $\mtc$, and $\lambda,\mu\in \mtc$. The following are equivalent:
\begin{enumerate}[label={(\roman*)}]
\item $\mathcal F$ is a $(\lambda,\mu)$-eigenfamliy.
\item There is a $(0,0)$-eigenfamily $\widetilde{\mathcal F}$ of homogeneous polynomials from $\mtr^{m+1}$ to $\mtc$, all of the same degree $d$, so that the map
$$\widetilde{\mathcal F}\to\mathcal F,\qquad F\mapsto F\lvert_{S^m},$$
is a well defined bijection and $\lambda =- d(d+m-1)$, $\mu=-d^2$.
\end{enumerate}
\end{futuretheorem}

The other compact rank one symmetric spaces, with exception of the Cayley plane, can be obtained from the sphere via a Riemannian submersion with totally geodesic fibres. One then gets a characterisation of the global eigenfamilies on the real, complex, and quaternionic projective spaces:

\begin{futuretheorem}{thm: CROSS}
Let $X\in\{\mathds{RP}^m, \mathds{CP}^m, \mathds{HP}^m\}$ and suppose $\mathcal F$ is a family of functions on $X$ to $\mtc$. Denoting with $\pi: S^{m(X)}\to X$ the usual submersion one has that the following are equivalent:
\begin{enumerate}[label=(\roman*)]
\item $\mathcal F$ is a $(\lambda,\mu)$-eigenfamily on $X$.
\item $\pi^*(\mathcal F)=\{F\circ\pi : S^{m(X)}\to\mtc\mid F\in\mathcal F\} $ is a $(\lambda,\mu)$-eigenfamily on $S^{m(X)}$.
\end{enumerate}
\end{futuretheorem}

The rest of the article then concerns $(0,0)$-eigenfamilies on $\mtr^{m+1}$ consisting of homogeneous polynomials of the same degree. The elements of such an eigenfamily are homogeneous polynomial harmonic morphisms, which are objects of independent interest and have been studied in the past (see e.g. \cite{ABB-99} and \cite{baird-wood-book}). Note that if a family of polynomials is a $(\lambda,\mu)$-eigenfamily then clearly $(\lambda,\mu)=(0,0)$, so we will refer to $(0,0)$-eigenfamilies simply as eigenfamilies when dealing with polynomials.

Families of holomorphic maps $\mtc^m\to\mtc$ are automatically $(0,0)$-eigenfamilies. In particular the space of holomorphic homogeneous polynomials of degree $d$ provides an immediate example of a polynomial eigenfamily; that these induce eigenfamilies on the sphere is known (see \cite{gudmundsson-munn-23}). The question of finding families that are not holomorphic with respect to any K\"ahler structure then arises naturally. The following definition is a convenient way of encoding holomorphy of functions (also on odd dimensional $\mtr^m$), without referencing any fixed K\"ahler structure:

\begin{defn}\begin{enumerate}
\item A family $\mathcal F$ of functions from $\mtr^m\to\mtc$ said to be \textit{uniformly of complex type} if there is a $k\geq0$, an orthogonal projection $\pi:\mtr^m\to\mtc^k$, and a family $\mathcal G$ of holomorphic maps $\mtc^k\to\mtc$ so that $\mathcal F = \pi^*(\mathcal G) \defeq \{G\circ\pi \mid G\in\mathcal G\}$.
\item A function $F:\mtr^m\to\mtc$ is said to be of \textit{complex type} if the family $\{F\}$ is uniformly of complex type.
\end{enumerate}
\end{defn}

By work of Baird and Wood \cite{baird-wood-88} and Wood \cite{wood-92} it is well known that any globally defined harmonic morphism $\mtr^m\to\mtc$ for $m\leq 4$ is of complex type. This remains true in the context of $(0,0)$-eigenfamilies:

\begin{futuretheorem}{thm: 4-ct}
Let $m\leq 4$ and suppose $\mathcal F$ is a $(0,0)$-eigenfamily of maps (not necessarily polynomials) $\mtr^m\to\mtc$. Then $\mathcal F$ is uniformly of complex type.
\end{futuretheorem}

There is a very simple criterion with which one can determine when a function or family is of complex type:

\begin{futuretheorem}{thm: complex-type}
A family $\mathcal F$ of smooth functions from $\mtr^m$ to $\mtc$ is uniformly of complex type if and only if the gradients $\{\snab{x}F\mid x\in\mtr^m, F\in\mathcal F\}$ span an isotropic subspace of $\mtr^m\otimes\mtc$, i.e. if and only if for all $F,G\in\mathcal F$ and $x,y\in\mtr^m$:
\begin{equation}
(\snab{x} F)^T\, \snab{y}G =0.\label{eq: hol}
\end{equation}
\end{futuretheorem}
We expect this result to be known. But since we do not have a reference for this characterisation, a proof is included in Section \ref{sec: hol} for convenience of the reader. It is interesting to note that for a homogeneous degree $2$ polynomial $\mtr^m\to\mtc$, $x\mapsto x^TAx$ the condition of being a harmonic morphism and the condition of being of complex type coincide.

Using Theorem \ref{thm: complex-type}, one can check for example that the following family is an eigenfamily on $\mtr^8$ consisting of degree $3$ homogeneous polynomials, and that the family is not uniformly of complex type:
$$\{\mtc^4\to\mtc, (z,u,w,v)\mapsto a(z^2w+zu\overline v)+b(zu\overline w-z^2v)+c(u^2\overline v+zuw)+d(u^2\overline w-zuv)\mid a,b,c,d\in\mtc\}.$$

A feature of this example is that even though the maps are generically not of complex type, they are still all holomorphic in the variables $z,u$. It turns out to be useful to encode the existence of such holomorphic variables in the notion of an \textit{axis of holomorphy} (c.f. Definition \ref{def: axis}). One has for instance:

\begin{futuretheorem}{thm: 56-reduc}
Let $m\in\{5,6\}$ and suppose $\mathcal F$ is an eigenfamily of homogeneous polynomial harmonic morphisms from $\mtr^m$ to $\mtc$. If $\mathcal F$ has a uniform axis of holomorphy of (real) dimension at least $2$, then $\mathcal F$ is uniformly of complex type.
\end{futuretheorem}

Eigenfamilies consisting of homogeneous degree $2$ polynomials on $\mtr^m$ always admit a uniform axis of holomorphy of dimension at least $2$:
\begin{futuretheorem}{thm: deg2-axis}
Let $\mathcal F$ be an eigenfamily of homogeneous degree $2$ polynomial harmonic morphisms $\mtr^m\to\mtc$. Then $\mathcal F$ admits a uniform axis of holomorphy of (real) dimension at least $\min(2,m)$.
\end{futuretheorem}

Leveraging the notion of an axis of holomorphy allows one to classify all pairs $F_1,F_2$ of degree $2$ homogeneous polynomials $\mtr^m\to\mtc$ for which $\{F_1,F_2\}$ is an eigenfamily. See Definition \ref{def: deg2-data} in Section \ref{sec: deg2} for the relevant definitions:

\begin{futuretheorem}{thm: deg2-eigenpairs}
Let $F_1, F_2:\mtr^m\to\mtc$ be two homogenous degree $2$ polynomials so that $\{F_1,F_2\}$ is a full eigenfamily. Then there are subspace data $(n,k,\delta)$ as well as polynomial and twisting data $(P_1,P_2,A)$, $(Y,C,v)$ so that:
\begin{enumerate}
\item Up to an isometry of the domain one can decompose $\mtr^m\cong\mtc^n\oplus\mtc^k\oplus\mtr^\delta$ so that $\mtc^n$ is a maximal uniform axis of holomorphy for $\{F_1,F_2\}$.
\item Let $X=(\frac12vv^T+C)Y^{-1}$. One has, with respect to the above decomposition:
\begin{align*}
F_1 ((z_1,....,z_n), (w_1,...,w_k),t) &= P_1(z_1,...,z_n)+\sum_{ij}z_iA_{ij}w_j\\
F_2 ((z_1,....,z_n), (w_1,...,w_k),t) &= P_2(z_1,...,z_n)+\sum_{ij}z_iA_{ij}\left(\sum_lX_{jl}w_l+\sum_lY_{jl}\overline{w}_l+v_jt\right)
\end{align*}
\end{enumerate}
Additionally for any subspace type together with polynomial and twisting data the above equations define an eigenfamily on $\mtr^{2(n+k)+\delta}$.
\end{futuretheorem}

The article ends with examples of methods with which homogeneous polynomial eigenfamilies on $\mtr^m$ can be constructed, see Section \ref{sec: constructions}.

The article is organised as follows: In Section \ref{sec: eigen-spheres} we provide background information on harmonic morphisms and $(\lambda,\mu)$-eigenfamilies, and prove Theorems \ref{thm: Sn-eigen} and \ref{thm: CROSS}. In Section \ref{sec: hol} we investigate the question of when $(0,0)$-eigenfamilies are holomorphic, here we define the notions of complex type and that of an axis of holomorphy and provide examples of eigenfamilies that are not of complex type. Theorems \ref{thm: complex-type}, \ref{thm: 4-ct}, \ref{thm: 56-reduc} are proven in this section, and we describe how an axis of holomorphy leads to a reduction principle (c.f. Proposition \ref{prop: reduction-axis}). In Section \ref{sec: deg2} we investigate the case of eigenfamilies consisting of degree $2$ polynomials, showing Theorems \ref{thm: deg2-axis} and \ref{thm: deg2-eigenpairs}. Finally Section \ref{sec: constructions} reviews some constructions.

The author gratefully acknowledges the support of Germany's Excellence Strategy EXC 2044 390685587, Mathematics M\"unster: Dynamics-Geometry-Structure.

\section{Eigenfamilies on spheres}\label{sec: eigen-spheres}

In subsection \ref{subsec: hm-ef} we review some basic facts about harmonic morphisms and $(\lambda,\mu)$-eigenfamilies. In subsection \ref{subsec: sn} we show Theorem \ref{thm: Sn-eigen} and we show Theorem \ref{thm: CROSS} in subsection \ref{subsec: cross}.

For an overview of the theory and history of harmonic morphisms we recommend both the book \cite{baird-wood-book} and the regularly updated online bibliography \cite{hm-bib}.

\subsection{Harmonic morphisms and $(\lambda,\mu)$-eigenfamilies}\label{subsec: hm-ef}

Let $(M,g)$, $(N,h)$ be two Riemannian manifolds.

\begin{defn}
A smooth map map $\varphi:M\to N$ is called a \textit{harmonic morphism} if for every $U\subseteq N$ open and $f:U\to \mtr$ harmonic the composition $f\circ \varphi: \varphi^{-1}(U)\to \mtr$ is again harmonic.
\end{defn}

These maps were originally studied in the context of Brelot harmonic spaces and potential theory. The geometric theory begins with the work of Fuglede and Ishihara, who independently found an infinitesimal characterisation of the above definition on Riemannian manifolds:

\begin{ntheorem}[Fuglede \cite{fuglede-78}, Ishihara \cite{ishihara-79}]
A smooth map $\varphi:M\to N$ is a harmonic morphism if and only if its harmonic and weakly horizontally conformal.
\end{ntheorem}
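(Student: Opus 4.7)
The plan is to derive everything from the classical composition law for the Laplacian: for smooth $f : U \subseteq N \to \mtr$ and $\varphi: M \to N$, at a point $x \in \varphi^{-1}(U)$ one has
\begin{equation*}
\Delta^M(f\circ\varphi)(x) \;=\; \sum_{i=1}^{\dim M}(\nabla df)_{\varphi(x)}\bigl(d\varphi_x(e_i), d\varphi_x(e_i)\bigr) \;+\; df_{\varphi(x)}\bigl(\tau(\varphi)(x)\bigr),
\end{equation*}
where $\{e_i\}$ is any $g$-orthonormal basis of $T_xM$ and $\tau(\varphi) = \operatorname{trace}_g\nabla d\varphi$ is the tension field. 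This identity is derived once by a direct coordinate computation, and afterwards both implications are read off from it.

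For the sufficiency direction, assume $\varphi$ is harmonic and weakly horizontally conformal. Harmonicity kills the $df(\tau(\varphi))$ term. Weak horizontal conformality means that at each $x$ there is $\Lambda(x)\geq 0$ such that $d\varphi_x$ vanishes on the vertical space and maps the horizontal space to $T_{\varphi(x)}N$ with conformal factor $\sqrt{\Lambda(x)}$. Choosing the $e_i$ adapted to the horizontal/vertical splitting gives $\sum_i (\nabla df)(d\varphi e_i, d\varphi e_i) = \Lambda(x)\, \Delta^N f(\varphi(x))$, which vanishes whenever $f$ is harmonic. Thus $f\circ\varphi$ is harmonic.

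The heart of the argument is the necessity direction, and the main obstacle is producing enough local harmonic functions on $N$ to extract pointwise conditions. For any $y_0 \in N$ I would fix normal coordinates, and observe that given any trace-free symmetric bilinear form $B$ on $T_{y_0}N$ and any covector $\alpha \in T_{y_0}^*N$, there is a locally defined harmonic function $f$ with $f(y_0)=0$, $(df)_{y_0} = \alpha$, and $(\nabla df)_{y_0} = B$. This is standard: start with the polynomial $f_0(y) = \alpha(y) + \tfrac12 B(y,y)$ in normal coordinates, which by the trace-free condition satisfies $\Delta^N f_0 = O(|y|)$, and correct it by solving a Dirichlet problem on a small ball, or by iteratively adding homogeneous harmonic polynomials. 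Plugging such $f$ into the composition formula at any $x\in\varphi^{-1}(y_0)$, first with $\alpha=0$ and $B$ arbitrary trace-free, forces the quadratic form $\xi \mapsto \sum_i \xi(d\varphi_x e_i)^2$ on $T_{y_0}^*N$ to be proportional to the dual metric $h^{-1}_{y_0}$; this proportionality is precisely weak horizontal conformality at $x$. Then, with $B=0$ and $\alpha$ arbitrary, one obtains $\alpha(\tau(\varphi)(x))=0$, hence $\tau(\varphi)(x)=0$, i.e. harmonicity. Since $x$ was arbitrary, this completes the proof.
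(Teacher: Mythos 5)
The paper does not prove this statement --- it is quoted as a classical theorem with a citation to Fuglede and Ishihara --- so there is no in-paper argument to compare against. Your plan is essentially the standard proof from Fuglede's original article (and Chapter 4 of the Baird--Wood book): the composition law $\Delta^M(f\circ\varphi)=\operatorname{trace}_g\bigl((\nabla df)(d\varphi\cdot,d\varphi\cdot)\bigr)+df(\tau(\varphi))$, the conformal-factor computation for sufficiency, and for necessity the existence of local harmonic functions with prescribed $1$-jet and prescribed trace-free Hessian at a point, followed by the observation that a symmetric form pairing to zero against all trace-free forms is a multiple of the metric. The one step you should not wave at is the jet-prescription lemma: after subtracting the quadratic model $f_0$ the Dirichlet correction $u$ is only small in $C^2$, not vanishing to second order at $y_0$, so either iterate the polynomial correction to kill the Taylor expansion of $\Delta f_0$ to high order and then argue that an approximate $2$-jet suffices (the identity you extract is linear and pointwise, so you may pass to the limit), or invoke the precise statement of Fuglede's lemma. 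With that point made explicit the argument is complete and is the same route the cited sources take.
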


We briefly explain these conditions:

\begin{defn}\label{def: whc}
\begin{enumerate}
\item A smooth map between two Riemannian manifolds $(M,g)$ and $(N,h)$ is called \emph{harmonic} if it is a stationary point of the energy functional $\varphi\mapsto \int_M \|D\varphi\|^2$.
\item A smooth map $\varphi:(M,g)\to(N,h)$ is called \emph{weakly horizontally conformal} if at any point $x\in M$ the differential $D_x\varphi$ is either $0$ or it maps the horizontal space $\ker(D_x\varphi)^\perp\subset T_xM$ conformally onto $T_{\varphi(x)}N$.
\end{enumerate}
\end{defn}
\begin{remark}
\begin{enumerate}
\item $\varphi:(M,g)\to (N,h)$ is harmonic if and only if the tension field $\tau(\varphi)=\Tr(\nabla D\varphi)$ vanishes.
\item If $\varphi:(M,g)\to (N,h)$ is weakly horizontally conformal there exists a function $\lambda:M\to\mtr$, called the \emph{conformality factor}, so that $\varphi^*(h)=\lambda^2 g\lvert_{\ker(d\varphi)^\perp}$, meaning that for all $x\in M$, $v,w\in \ker(D_x\varphi)^\perp\subset T_xM$ one has:
$$h(D_x\varphi\, v, D_x\varphi\, w) =\lambda^2(x)\,g(v,w).$$
\end{enumerate}
\end{remark}

The theory of harmonic morphisms between Riemannian manifolds is extremely rich, we will only briefly remark on two aspects. First one has that homogeneous polynomial harmonic morphisms between vector spaces arise naturally: 

\begin{ntheorem}[Fuglede \cite{fuglede-78}]
Let $\varphi: M\to N$ be a harmonic morphism and suppose $x\in M$ is a critical point of $\varphi$. Then $x$ is of finite order and the symbol of $\varphi$ at $x$ is a homogeneous polynomial harmonic morphism between the real vector spaces $T_xM\to T_{\varphi(x)}N$.
\end{ntheorem}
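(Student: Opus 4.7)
The plan is to reduce everything to a flat-space computation via normal coordinates. Choose normal coordinates $(x^i)$ on $M$ centered at $x$ and $(y^\alpha)$ on $N$ centered at $\varphi(x)$; these identify $T_xM$ and $T_{\varphi(x)}N$ with Euclidean vector spaces. First I would address the finite-order claim. As a harmonic morphism is in particular a harmonic map, it satisfies a quasilinear second-order elliptic system; strong unique continuation (Aronszajn, together with its adaptation to the harmonic map system) then implies that either $\varphi$ is locally constant near $x$, which we discard, or there is a smallest integer $d$ such that the degree-$d$ homogeneous part of the Taylor expansion of $\varphi$ at $x$ is nonzero. Since $D_x\varphi=0$, one must have $d\ge 2$; this $d$ is by definition the order of $x$, and the degree-$d$ Taylor polynomial defines a homogeneous polynomial map $P:T_xM\to T_{\varphi(x)}N$, the candidate for the symbol.

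The bulk of the work is to show that $P$ is a harmonic morphism between the Euclidean tangent spaces, which I would do by Taylor-expanding the two equations of Definition \ref{def: whc} in the source coordinates and isolating the leading-order terms. The key inputs are that in normal coordinates $g^{ij}=\delta^{ij}+O(|x|^2)$, $\Gamma^k_{ij}(x)=O(|x|)$ on the source, and analogously $h^{\alpha\beta}(y)=\delta^{\alpha\beta}+O(|y|^2)$, $\Gamma^\alpha_{\beta\gamma}(y)=O(|y|)$ on the target. Writing $\varphi^\alpha=P^\alpha+O(|x|^{d+1})$ and expanding the tension field
\[
\tau(\varphi)^\alpha \;=\; g^{ij}\partial_i\partial_j\varphi^\alpha - g^{ij}\Gamma^k_{ij}\partial_k\varphi^\alpha + g^{ij}\Gamma^\alpha_{\beta\gamma}(\varphi)\partial_i\varphi^\beta\partial_j\varphi^\gamma,
\]
one checks that the lowest-order contribution is $\Delta_{\mathrm{flat}} P^\alpha$ at order $d-2$, while every Christoffel or curvature correction carries strictly more factors of $x$. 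Vanishing of this leading term yields $\Delta_{\mathrm{flat}} P=0$. Similarly, weak horizontal conformality can be written as $g^{ij}\partial_i\varphi^\alpha\partial_j\varphi^\beta=\lambda^2 h^{\alpha\beta}(\varphi)$ for some function $\lambda$; the leading balance at order $2(d-1)$ gives $\delta^{ij}\partial_i P^\alpha\partial_j P^\beta=\Lambda^2\,\delta^{\alpha\beta}$, where $\Lambda^2$ is the homogeneous leading term of $\lambda^2$. This is precisely weak horizontal conformality of $P$ between the flat tangent spaces. Combined with harmonicity, $P$ is a harmonic morphism.

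The main obstacle is the finite-order claim, which genuinely requires strong unique continuation for the harmonic map system and not merely smoothness of $\varphi$; without it one cannot exclude a critical point at which $\varphi$ is $C^\infty$-flat yet nonconstant in every neighbourhood. Granted that, the remaining work is a careful bookkeeping of orders of vanishing; the counting is clean precisely because we took normal coordinates on both the source and the target, so that no first-order Christoffel terms pollute the leading behaviour of either equation.
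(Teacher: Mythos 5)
This theorem is stated in the paper as a quoted background result with a citation to Fuglede; the paper contains no proof of it, so there is nothing internal to compare against. Judged on its own, your argument is the standard one (essentially the route taken in Fuglede's paper and in Chapter 5 of \cite{baird-wood-book}) and it is correct in outline: normal coordinates on both ends, unique continuation for the harmonic map system to get finite order $d$, and then order-counting in the tension field and in the coordinate form $g^{ij}\partial_i\varphi^\alpha\partial_j\varphi^\beta=\lambda^2\,h^{\alpha\beta}(\varphi)$ of weak horizontal conformality to extract $\Delta_{\mathrm{flat}}P=0$ and $\delta^{ij}\partial_iP^\alpha\partial_jP^\beta=\Lambda^2\delta^{\alpha\beta}$. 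Three small points deserve a sentence each in a complete write-up. First, the finite-order claim presupposes $\varphi$ non-constant (for a constant map every point is critical of infinite order), so you should either add that hypothesis or note that unique continuation reduces to it on a connected $M$; your ``which we discard'' is doing real work there. Second, $\Lambda^2$ is the degree-$(2d-2)$ homogeneous part of $\lambda^2$, and you need it to be a \emph{nonnegative} polynomial for $P$ to be weakly horizontally conformal in the Riemannian sense; this is immediate since $\Lambda^2(v)=\lim_{t\to0}t^{-(2d-2)}\lambda^2(tv)\ge0$, and tracing over $\alpha=\beta$ shows $\Lambda^2=\tfrac1n\sum_\alpha\lvert\nabla P^\alpha\rvert^2\not\equiv0$, so $P$ is genuinely nonconstant of degree $d$. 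Third, one should remark that the degree-$d$ Taylor polynomial of a map all of whose lower-order derivatives vanish at $x$ transforms linearly under coordinate changes, so the symbol is a well-defined map $T_xM\to T_{\varphi(x)}N$ independent of the chosen normal coordinates. With those additions the proof is complete.
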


Secondly there is a strong connection with minimal submanifolds, in the case $\dim(N)=2$ this connection is especially strong and is expressed by the following statement:

\begin{ntheorem}[Baird-Eells \cite{baird-eells-80}]
Suppose $\varphi:M\to N$ is horizontally weakly conformal and $\dim(N)=2$, then $\varphi$ is harmonic (and so a harmonic morphism) if and only if the fibres of $\varphi$ are minimal at regular points.
\end{ntheorem}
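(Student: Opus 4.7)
The plan is to compute the tension field $\tau(\varphi)$ directly in a frame adapted to the weak horizontal conformality of $\varphi$, and then observe that the $\dim N = 2$ hypothesis causes the non-fibre contribution to vanish. First I would fix a regular point $p \in M$ and choose a local orthonormal frame $\{e_1, \ldots, e_n, v_1, \ldots, v_{m-n}\}$ of $TM$ near $p$, with $\{e_i\}$ spanning the horizontal distribution $\ker(D\varphi)^\perp$ and $\{v_j\}$ spanning the vertical. By Definition \ref{def: whc}, there is a conformality factor $\lambda > 0$ on the regular set, and one may additionally choose a local orthonormal frame $\{\tilde e_1, \ldots, \tilde e_n\}$ of $TN$ along $\varphi$ such that $D\varphi(e_i) = \lambda \tilde e_i$, while $D\varphi(v_j) = 0$.

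Next I would decompose $\tau(\varphi) = \sum_i (\nabla D\varphi)(e_i, e_i) + \sum_j (\nabla D\varphi)(v_j, v_j)$. Since $D\varphi(v_j) = 0$, the vertical contribution reduces to $-\sum_j D\varphi(\nabla^M_{v_j} v_j)$, and only the horizontal part of that sum survives under $D\varphi$; by definition of the second fundamental form of the fibres, this horizontal part equals $(m-n)\,\hat H$, where $\hat H$ denotes the mean curvature vector of the fibres. Hence the vertical terms contribute $-(m-n)\,D\varphi(\hat H)$ to $\tau(\varphi)$.

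For the horizontal part I would expand
$$(\nabla D\varphi)(e_i, e_i) = \nabla^{\varphi^{-1}TN}_{e_i}(\lambda \tilde e_i) - D\varphi(\nabla^M_{e_i} e_i) = e_i(\lambda)\, \tilde e_i + \lambda^2\, \nabla^N_{\tilde e_i}\tilde e_i - D\varphi(\nabla^M_{e_i}e_i),$$
and then compare the Levi-Civita connections of $M$ and $N$ using that $\lambda^{-1}D\varphi$ is a pointwise isometry from $\mathcal H$ onto $TN$. A standard but intricate manipulation organises the terms into the identity $\sum_i (\nabla D\varphi)(e_i, e_i) = (n-2)\,\lambda^2\, D\varphi(\text{grad}^{\mathcal H}\ln\lambda)$. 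Setting $n = 2$ eliminates this entire contribution, leaving $\tau(\varphi) = -(m-2)\,D\varphi(\hat H)$. Since $\hat H$ is horizontal and $D\varphi$ is injective on $\mathcal H$ at a regular point, this shows $\tau(\varphi) = 0$ if and only if $\hat H = 0$, i.e.\ the fibres are minimal at regular points.

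The main obstacle is establishing the coefficient $(n-2)$ in the horizontal identity. The $\nabla\lambda$-contributions enter from two sources: directly through $e_i(\lambda)\,\tilde e_i$ in $\nabla^{\varphi^{-1}TN}_{e_i}(\lambda \tilde e_i)$, and indirectly through the discrepancy between $\nabla^M_{e_i}e_i$ and $\nabla^N_{\tilde e_i}\tilde e_i$ dictated by the conformal rescaling of $g_M|_{\mathcal H}$ by $\lambda^2$. Careful application of the conformal-change-of-connection formula yields exactly $(n-2)$ surviving copies of $D\varphi(\text{grad}^{\mathcal H}\ln\lambda)$ once all cancellations are carried out. This $(n-2)$ factor is the precise algebraic reason the codimension-$2$ hypothesis $\dim N = 2$ enters the statement, and once it is in hand the theorem follows at once.
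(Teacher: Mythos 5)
This statement is quoted in the paper as a known background theorem of Baird and Eells, with a citation to \cite{baird-eells-80}; the paper gives no proof of it, so there is nothing internal to compare your argument against. What you propose is the standard proof of that classical result: derive the ``fundamental equation'' for a horizontally (weakly) conformal map,
$$\tau(\varphi) \;=\; -(m-n)\,D\varphi(\hat H)\;+\;(2-n)\,D\varphi\bigl(\mathrm{grad}_{\mathcal H}\ln\lambda\bigr),$$
and observe that the dilation term carries the factor $(2-n)$, so that for $\dim N=2$ harmonicity reduces to $D\varphi(\hat H)=0$, i.e.\ minimality of the fibres, since $D\varphi$ is injective on $\mathcal H$ at regular points. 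Your decomposition of the tension field into vertical and horizontal traces is correct, and the identification of the vertical contribution with $-(m-n)\,D\varphi(\hat H)$ is fine. Two caveats. First, essentially all of the content of the theorem lives in the horizontal identity with its $(n-2)$ coefficient, and you assert it (``a standard but intricate manipulation'') rather than derive it; as written this is an accurate outline of the Baird--Eells argument, not a complete proof. (The extra $\lambda^2$ in your version of that identity, and the overall sign, depend only on which metric you use to form the gradient and are immaterial.) Second, harmonicity is a condition on all of $M$, whereas your equivalence is established only on the regular set; for the ``fibres minimal $\Rightarrow$ harmonic'' direction you should add that $\varphi$ is locally constant on the interior of its critical set, so $\tau(\varphi)=0$ there as well, and that continuity of $\tau(\varphi)$ then forces $\tau(\varphi)\equiv 0$ on all of $M$.
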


In the case that $\dim(N)=2$ one notes that the property of being a harmonic morphism is preserved under conformal transformations of the codomain. In particular if $\dim(N)=2$ the problem of local existence of harmonic morphisms $M\to N$ is captured entirely by the case $N=\mtc$ equipped with its Euclidean metric. A calculation shows that a map $\varphi:M\to\mtc$ is a harmonic morphism if and only if (see e.g. \cite{fuglede-78}):
$$\Delta\varphi =0,\qquad g_\mtc(\nabla \varphi, \nabla \varphi)=0.$$
Here $\Delta$ denotes the Laplacian of $(M,g)$, $g_\mtc$ is the complex bilinear extension of $g$ to $TM\otimes\mtc$, and $\nabla \varphi:M\to TM\otimes\mtc$ is the map $\nabla\mathrm{Re}(\varphi)+i\,\nabla\mathrm{Im}(\varphi)$.

We now turn to eigenfamilies. It is often convenient to refer to the \textit{conformality operator} in this context:

\begin{defn}
The \textit{conformality operator} $\kappa:C^\infty(M;\mtc)\times C^\infty(M;\mtc)\to C^\infty(M;\mtc)$ is defined by $\kappa(\varphi,\psi)=g_\mtc(\nabla \varphi,\nabla \psi)$.
\end{defn}

A map $\varphi:M\to\mtc$ is then weakly horizontally conformal if and only if $\kappa(\varphi,\varphi)=0$. Recall the definition of a $(\lambda,\mu)$-eigenfamily from the introduction:

\begin{defn}[Gudmundsson-Sakovich \cite{gudmundsson-sakovich-08}]
Let $\lambda,\mu\in\mtc$. A family $\mathcal F$ of smooth functions from $M$ to $\mtc$ is called a \textit{$(\lambda,\mu)$-eigenfamily} if for all $\varphi,\psi\in\mathcal F$:
$$\Delta \varphi = \lambda \varphi, \qquad \kappa(\varphi,\psi)=\mu\varphi\psi.$$
Functions that are elements of some $(\lambda,\mu)$-eigenfamily are called \textit{$(\lambda,\mu)$-eigenfunctions}.
\end{defn}
As explained in the introduction, eigenfamilies can provide a wealth of harmonic morphisms \cite{gudmundsson-sakovich-08}, proper $r$-harmonic maps \cite{gudmundsson-sobak-20}, and mininal submanifold of codimension 2 \cite{gudmundsson-munn-23}.

It is useful to note that the property of being an eigenfamily is closed under taking powers:
\begin{remark}If $\mathcal F$ is a $(\lambda,\mu)$-eigenfamily and $d\in\mtn$, then
$$\mathcal F^d\defeq \spn_\mtc\{\varphi_1\cdots \varphi_d \mid \varphi_i\in\mathcal F\}$$
is a $(\widetilde \lambda,\widetilde\mu)$-eigenfamily, where $\widetilde\lambda = d(\lambda+(d-1)\mu)$ and $\widetilde\mu = d^2\mu$. See e.g. Lemma A.1 in \cite{gudmundsson-sakovich-08}.
\end{remark}

\subsection{Eigenfamilies on $S^m$}\label{subsec: sn}
We now show the first result of this article, which is a characterisation of the globally defined eigenfamilies on the spheres $S^m$.

\pasttheorem{thm: Sn-eigen}
\begin{proof}
Before beginning with the proof note that if $F:\mtr^{m+1}\to\mtc$ is homogeneous of degree $d$ one then has for all $x\in\mtr^{m+1}$, $x\neq0$:
\begin{equation}
\snab{x}^{\mtr^{m+1}} F= d\cdot F(x) \frac{x}{\|x\|^2}+\|x\|^{d-1}\snab{x/\|x\|}^{S^m}(F\lvert_{S^m}).\label{eq: nabla-hom}
\end{equation}
Here we use $\snab{x}^MF$ to denote the gradient in a Riemannian manifold $M$ of a function $F:M\to\mtc$ at a point $x\in M$.

Starting with (i)$\implies$(ii):

A globally defined map $f:S^m\to\mtc$ satisfies $\Delta f=\lambda f$ for some $\lambda\in\mtc$, i.e. is an eigenfunction of the Laplacian, if and only if it is a spherical harmonic, i.e. there is a homogenous harmonic polynomial $F:\mtr^{m+1}\to\mtc$ such that $F\lvert_{S^m}=f$. The homogeneous polynomial $F$ is uniquely determined, denoting its degree by $d$ one has $\lambda=-d(d+m-1)$, see e.g. Chapter III.C.1 of \cite{berger-spectrum} for a textbook treatment.

For a $(\lambda,\mu)$-eigenfamily $\mathcal F$ of functions $S^m\to\mtc$ one then defines:
$$\widetilde{\mathcal F}\defeq \{F\in\mtc[x_1,...,x_{m+1}]\mid F\text{ homogeneous}, \Delta F =0, F\lvert_{S^m}\in\mathcal F\}.$$
In particular  $\lambda=-d(d+m-1)$, where $d$ is the degree of the elements of $\widetilde{\mathcal F}$, and the map $\widetilde{\mathcal F}\to\mathcal F$ induced by the restriction to $S^m$ is a bijection.

Let $F_1, F_2\in\widetilde{\mathcal F}$. By applying (\ref{eq: nabla-hom}) one sees for all $x\in\mtr^{m+1}$, $x\neq0$:
\begin{align*}
(\snab{x}^{\mtr^{m+1}} F_1)^T\,\snab{x}^{\mtr^{m+1}} F_2
&= \|x\|^{2d-2}\ \kappa_{S^m}(F_1\lvert_{S^m}, F_2\lvert_{S^m})(\tfrac x{\|x\|})+\frac{d^2}{\|x\|^2} F_1(x)F_2(x)\\
&= \frac{(\mu+d^2)}{\|x\|^2}F_1(x)F_2(x)
\end{align*}
And so
$$\|x\|^2(\snab{x}^{\mtr^{m+1}} F_1)^T\,\snab{x}^{\mtr^{m+1}} F_2=(\mu +d^2)F_1(x)F_2(x).$$
Since both the left- and right-hand sides are polynomials, one finds that both sides are $0$ or that $F_1F_2$ must have an $\|x\|^2$ factor. Note however that if $m\geq2$ the polynomial $\|x\|^2$ is an irreducible element of $\mtc[x_1,...,x_{m+1}]$, and so if $F_1F_2$ has an $\|x\|^2$ factor then at least one of $F_1$ or $F_2$ also does. However $F_i$ having an $\|x\|^2$ factor is incompatible with the condition $\Delta F_i=0$, whence both $\mu=-d^2$ and $(\nabla^{\mtr^{m+1}} F_1)^T\,\nabla^{\mtr^{m+1}} F_2 =0$ follow.

This verifies that $\widetilde{\mathcal F}$ is a $(0,0)$ eigenfamily and that $\mu=-d^2$, provided $m\geq2$, concluding the direction (i)$\implies$(ii) in this case.

The case $m=1$ follows by a similar argument: Now $\|x\|^2=(x_1+ix_2)(x_1-ix_2)$ is a product of two distinct irreducible elements. Taking $F_1=F_2$ implies that if $F_1^2$ has both factors, then so too must $F_1$, which leads to $\mu=-d^2$. $\mu=-d^2$ then implies $(\nabla^{\mtr^{m+1}} F_1)^T\,\nabla^{\mtr^{m+1}} F_2=0$ for arbitrary $F_1, F_2\in\widetilde{\mathcal F}$.

(ii)$\implies$(i):

Let $\widetilde{\mathcal F}$ be as in the statement of the theorem. As remarked above, the condition $\Delta F =0$ for all $F\in\widetilde{\mathcal F}$ implies $\Delta (F\lvert_{S^m}) = -d(d+m-1) F\lvert_{S^m}=\lambda F\lvert_{S^m}$.

Similarly from $(\nabla^{\mtr^{m+1}}F_1)^T\nabla^{\mtr^{m+1}}F_2=0$ one gets from (\ref{eq: nabla-hom}) that for all $x\in\mtr^{m+1}$, $x\neq0$:
\begin{align*}
0=(\snab{x}^{\mtr^{m+1}} F_1)^T\,\snab{x}^{\mtr^{m+1}} F_2&= \|x\|^{2d-2}\ \kappa_{S^m}(F_1\lvert_{S^m},F_2\lvert_{S^m})(\tfrac{x}{\|x\|})+\frac{d^2}{\|x\|^2} F_1(x)F_2(x)\\
&= \|x\|^{2d-2}\left( \kappa_{S^m}(F_1\lvert_{S^m},F_2\lvert_{S^m})(\tfrac{x}{\|x\|}) + d^2 F_1(\tfrac{x}{\|x\|}) F_2(\tfrac{x}{\|x\|})\right).
\end{align*}
Which implies $\kappa_{S^m}(F_1\lvert_{S^m},F_2\lvert_{S^m}) =- d^2 \, (F_1\lvert_{S^m})\, ( F_2\lvert_{S^m})= \mu\, (F_1\lvert_{S^m})\, ( F_2\lvert_{S^m})$. $\mathcal F$ is then a $(\lambda,\mu)$-eigenfamily.
\end{proof}

From Theorem 2.1 of \cite{ABB-99} it is known that a weakly horizontally conformal map $\mtr^m\to\mtr^n$ for which each component is a polynomial is automatically harmonic. In our case this gives the following simplification:

\begin{corollary}
A family $\mathcal F$ of polynomials $\mtr^m\to\mtc$ is a $(0,0)$-eigenfamily on $\mtr^m$ if and only if for all $x\in\mtr^m$ and $F_1,F_2\in\mathcal F$:
$$(\snab{x} F_1)^T\,\snab{x} F_2 =0.$$
\end{corollary}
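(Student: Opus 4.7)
The forward implication is immediate: if $\mathcal{F}$ is a $(0,0)$-eigenfamily, then by definition $\kappa(F_1,F_2) = g_\mtc(\nabla F_1, \nabla F_2) = 0$ for all $F_1, F_2 \in \mathcal{F}$, which at each point $x$ reads exactly $(\snab{x}F_1)^T\snab{x}F_2 = 0$.

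For the converse, assume the gradient condition holds. This condition is precisely the statement $\kappa(F_1, F_2) = 0$ for all $F_1, F_2 \in \mathcal{F}$, so the second eigenfamily equation $\kappa(F_1,F_2) = 0\cdot F_1 F_2$ is free. It remains to show $\Delta F = 0$ for every $F \in \mathcal{F}$. Taking $F_1 = F_2 = F$ in the assumption gives $g_\mtc(\nabla F, \nabla F) = 0$ at every point, which is the condition that $F$, viewed as a smooth map $\mtr^m \to \mtr^2$, is weakly horizontally conformal. Since $F$ is polynomial, Theorem 2.1 of \cite{ABB-99} then implies $F$ is harmonic, i.e. $\Delta F = 0$.

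Thus both eigenfamily equations hold with $(\lambda,\mu) = (0,0)$, and $\mathcal{F}$ is a $(0,0)$-eigenfamily. The entire content of the corollary is that, in the polynomial category, the first eigenfamily equation is automatically implied by the second via the ABB-99 result; no genuine obstacle arises once that theorem is invoked.
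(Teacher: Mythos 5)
Your proof is correct and follows exactly the route the paper intends: the gradient condition is the conformality-operator equation, and the harmonicity $\Delta F = 0$ comes for free from Theorem 2.1 of \cite{ABB-99} applied to each $F$ (via the case $F_1 = F_2$), which is precisely the fact the paper states in the sentence immediately preceding the corollary. Nothing is missing.
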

In particular a polynomial $F:\mtr^m\to\mtc$ is a harmonic morphism if and only if
\begin{equation}
(\snab{x} F)^T\,\snab{x} F=0\label{eq: iso}
\end{equation}
holds for all $x\in\mtr^m$.

\subsection{Other compact rank one symmetric spaces}\label{subsec: cross}

\pasttheorem{thm: CROSS}
\begin{proof}
In each of the above cases the map $\pi:S^{m(X)}\to X$ is a Riemannian submersion with totally geodesic fibres. For any $F,G\in\mathcal F$ this then implies:
$$\kappa_X( F, G)\circ\pi = \kappa_{S^{m(X)}}( F \circ \pi, G\circ\pi) ,\qquad (\Delta_X F) \circ \pi = \Delta_{S^{m(X)}}(F\circ\pi).$$The statement then follows immediately.
\end{proof}

Together with Theorem \ref{thm: Sn-eigen} this characterises the globally defined eigenfamilies on compact rank one symmetric spaces, with exception of the Cayley plane $\mathds{OP}^2$, in terms of families of polynomial harmonic morphisms. In each case the families that descend to eigenfamilies on the symmetric space are precisely those consisting of $G$-invariant polynomials. Here $G\in\{S^0, S^1, S^3\}$ depending on whether $X$ is a real, complex, or quaternionic projective space.

\begin{example}\label{example: cross}
\begin{enumerate}
\item If $X=\mathds{RP}^m$ then $(\lambda,\mu)$-eigenfamilies correspond to eigenfamilies of homogeneous $\{\pm1\}$-invariant complex valued polynomials on $\mtr^{m+1}$. The condition of $\{\pm1\}$-invariance means that the polynomials must be of even degree. The range of values $(\lambda,\mu)$ for which there exist global $(\lambda,\mu)$-eigenfamilies is $\{(-2d(m-1+2d),-4d^2) \mid d\in\mtn\}$.
\item If $X=\mathds{CP}^m$ then $(\lambda,\mu)$-eigenfamilies correspond to eigenfamilies of homogeneous $U(1)$-invariant complex valued polynomials on $\mtc^{m+1}$. The algebra of $U(1)$-invariant polynomials is generated by $z_i\overline{z_j}$, where $i,j\in\{1,..,m+1\}$. Such polynomials are often called \textit{bi-invariant}. The range of values $(\lambda,\mu)$ for which there exist global $(\lambda,\mu)$-eigenfamilies is $\{(-4d(m+d),-4d^2) \mid d\in\mtn\}$. 
\item If $X=\mathds{HP}^m$ then $(\lambda,\mu)$-eigenfamilies correspond to eigenfamilies of homogeneous $SU(2)$-invariant complex valued polynomials on $\mth^{m+1}$. The range of values $(\lambda,\mu)$ for which there exist global $(\lambda,\mu)$-eigenfamilies is $\{(-4d(2m+1+d),-4d^2) \mid d\in\mtn\}$. 
\end{enumerate}
\end{example}

\section{Relations to holomorphicity}\label{sec: hol}

For two smooth functions $f,g:\mtc^m\to\mtc$ one has for all $x\in\mtc^m$:
\begin{align}
(\nabla_{\!x} f)^T\nabla_{\!x} g &= 2\sum_{j=1}^m \left(\partial_{z_j} f \partial_{\bar{z}_j} g +\partial_{z_j} g \partial_{\bar{z}_j} f\right)(x), \label{eq: hol1}\\
\Delta f\,(x) &= 4\sum_{j=1}^m (\partial_{z_j}\partial_{\bar{z}_j} f)\,(x), \label{eq: hol2}
\end{align}  
where $z_1,...,z_m$ are the standard complex coordinates on $\mtc^m$. In particular a family of holomorphic functions on $\mtc^m$ will trivially be a $(0,0)$-eigenfamily.

Pre-composing a family of holomorphic functions on $\mtc^m$ with an isometry or an orthogonal projection preserves the property that the functions are a $(0,0)$-eigenfamily. As remarked in the introduction, we are mostly interested in eigenfamilies that do \textit{not} arise from families of holomorphic functions in this way. For convenience we will say that families constructed in this manner are \textit{uniformly of complex type}, c.f. Definition \ref{def: complex-type} below.

In Subsection \ref{subsec: complex-type} we give a simple geometric criterion characterising families that are uniformly of complex type, this is Theorem \ref{thm: complex-type}. It turns out to be useful to fix what it means for a function to be \say{holomorphic in some variables}, this is done in in Subsection \ref{subsec: axis-hol} via the notion of an \textit{axis of holomorphy} c.f. Definition \ref{def: axis}. This notion is the source of much of the flexibility one has in generating examples, see also the discussion of Section~\ref{sec: gluing}. 

We use the notion of an axis of holomorphy to show Theorem \ref{thm: 4-ct}, also in Subsection \ref{subsec: axis-hol}. In Subsection \ref{subsec: reduction} we briefly describe how an axis of holomorphy leads to a reduction procedure and use this to show Theorem \ref{thm: 56-reduc}.

\subsection{Functions and families of complex type}\label{subsec: complex-type}

\begin{defn}\begin{enumerate}\label{def: complex-type}
\item A family $\mathcal F$ of functions from $\mtr^m\to\mtc$ said to be \textit{uniformly of complex type} if there is a $k\geq0$, an orthogonal projection $\pi:\mtr^m\to\mtc^k$, and a family $\mathcal G$ of holomorphic maps $\mtc^k\to\mtc$ so that $\mathcal F = \pi^*(\mathcal G) \defeq \{G\circ\pi \mid G\in\mathcal G\}$.
\item A function $F:\mtr^m\to\mtc$ is said to be of \textit{complex type} if the family $\{F\}$ is uniformly of complex type.
\end{enumerate}
\end{defn}

If $m$ is even then a function is of complex type if and only if it is congruent to a holomorphic map, i.e. differs from a holomorphic map only by an isometry of the domain. This is equivalent to the existence of a K\"ahler structure $J$ on $(\mtr^m, g\subs{euc})$ with respect to which the map is holomorphic\footnote{Recall that the complex unit of a K\"ahler structure is parallel.}. If $m$ is odd, then maps of complex type on $\mtr^m$ factor through an even dimensional subspace, on which the above remark applies.

We briefly recall what it means for a subspace of a vector space to be isotropic:

\begin{defn}
A complex linear subspace $V\subseteq \mtr^m\otimes\mtc$ is called \textit{isotropic} if $v^Tw=0$ for all $v,w\in V$.
\end{defn}

\pasttheorem{thm: complex-type}

\begin{proof}
If $\mathcal F$ is uniformly of complex  type then, as remarked before, we may assume the domain to be $\mtc^k$ and each $F\in\mathcal F$ to be holomorphic. Equation (\ref{eq: hol}) is then a direct consequence of the Cauchy-Riemann equations, recall equation (\ref{eq: hol1}).

On the other hand, if equation (\ref{eq: hol}) holds, then $V\defeq\mathrm{span}_\mtc\{\snab{x} F \mid x\in\mtr^m, F\in\mathcal F\}$ is an isotropic subspace of $\mtr^m\otimes\mtc$. Let $z_1,...,z_k$ be an orthonormal basis of $V$ with respect to the usual Hermitian inner product on $\mtr^m\otimes \mtc$. For every $j\in\{1,...,k\}$, define $x_j,y_j\in\mtr^m$ via:
$$z_j = \frac1{\sqrt 2}(x_j+i y_j).$$
Since the vectors $z_j$ are isotropic, orthogonal and of norm $1$, it follows that $x_1,...,x_k,y_1,...,y_k$ constitute an orthonormal system of $\mtr^m$. Define:
$$a:\mtc^k\to\mtr^m,\qquad (\lambda_1,...,\lambda_k)\mapsto \sum_{j=1}^k \mathrm{Re}(\lambda_j)x_j +\sum_{j=1}^k \mathrm{Im}(\lambda_j)y_j,$$
which is then isometric and $\mtr$-linear. We take $\pi=a^*$ to be the adjoint of $a$ and let $\mathcal G = \{F\circ a\mid F\in \mathcal F\}$.

We now check that $F\circ (a a^*)= F$ and that $F\circ a$ is holomorphic for all $F\in\mathcal F$. Note that $a a^*:\mtr^m\to\mtr^m$ is an orthogonal projection to the image of $a$. Suppose that $v\in\mtr^m$ with $v\perp a(\mtc^k)$. Then we have for all $z\in\mtr^m$:
$$\frac{d}{dt}F(z+tv) = (\snab{z+tv} F)^T\cdot v = \sum_{j=1}^k \lambda_j(t) z_j^T v=\sum_{j=1}^k\lambda_j(t)(x_j^T v+ i y_j^T v)=0$$
where we have decomposed $\nabla_{z+tv}F = \sum_j \lambda_j(t) z_j$ for some $\lambda_j(t)\in\mtc$, this is possible since the gradient can be written as a complex linear combination of $z_1,...,z_k$. This implies $F\circ a a^* = F$. For holomorphicity consider $z,v\in\mtc^k$, write $v=(\lambda_1,...,\lambda_k)$, $\snab{a(z)} F=\sum_j \mu_j z_j$, and denote with $J$ the complex unit of $\mtc^k$, then:
\begin{align*}
D_z(F\circ a)(J\cdot v)&= (\nabla_{a(z)} F)^T (\sum_j -\mathrm{Im}(\lambda_j) x_j+\mathrm{Re}(\lambda_j)y_j) = \frac1{\sqrt 2}\sum_j\mu_j (-\mathrm{Im}(\lambda_j)+i\mathrm{Re}(\lambda_j))\\
&= \frac1{\sqrt2}\sum_j i\mu_j \lambda_j = i(\snab{a(z)}F)^Ta(v) = iD_z(F\circ a)(v).
\end{align*}
This establishes the claim.
\end{proof}
\begin{remark}
The short and quick version of the proof of Theorem~\ref{thm: complex-type} is that every isotropic subspace $V\subset\mtr^m{\otimes\mtc}$ is the anti-holomorphic subspace $\{v +iJv\mid v\in \mtr^m, v\perp\ker J\}$ of a degenerate complex unit $J$, i.e. an anti-symmetric linear map $\mtr^m\to\mtr^m$ with $J^2=-\mathds1+p_{\ker J}$, where $p_{\ker J}$ is the orthogonal projection to $\ker J$. If the gradients of a map $F$ are valued in such a subspace then $F$ factors over $(\ker J)^\perp$ and
$$dF(Jv)=(\nabla F)^T(Jv)=-(J \nabla F)^T v = (i \nabla F)^T v =i\, dF(v)$$
for all $v$, whence $F$ is holomorphic with respect to $J$.
\end{remark}
%

\begin{remark}
If $F:\mtr^m\to\mtc$ is a homogenous degree $2$ polynomial, then conditions (\ref{eq: iso}) and (\ref{eq: hol}) coincide. Indeed, if $F(x)=x^TAx$ for $A$ a complex, symmetric $m\times m$ matrix, then:
$$(\nabla_x F)^T\, \nabla_x F =0\ \forall x \iff A^2=0\iff (\nabla_x F)^T\,\nabla_y F =0\ \forall x,y.$$
As a consequence any homogenous degree $2$ polynomial harmonic morphism $\mtr^m\to\mtc$ is of complex type, which is well known\footnote{It follows for example from the classification of homogeneous polynomial degree $2$ harmonic morphisms $\mtr^m\to\mtr^n$, c.f. \cite{ou-wood-96} and \cite{ou-97}.}. A similar calculation shows that any polynomial harmonic morphism of degree $2$ from $\mtr^m$ to $\mtc$, not necessarily homogeneous, is of complex type.\end{remark}

\begin{example}\label{ex: complex-type}
\begin{enumerate}[label=(\roman*)]
\item The polynomials
$$\mtc^4\to\mtc, \ (z,u,v,w)\mapsto zv + uw,\qquad \mtc^4\to\mtc,\ (z,u,v,w)\mapsto z\overline w - u\overline v$$
give a degree $2$ eigenfamily on $\mtc^4$. It is not uniformly of complex type (although the individual polynomials are of complex type).
\item The product of the two polynomials above
$$\mtc^4\to\mtc,\qquad (z,u,v,w)\mapsto z^2vw-u^2\overline{vw}+zu (|w|^2-|v|^2)$$
is a polynomial harmonic morphism of degree $4$, it is not of complex type.
\item As $a,b,c,d$ vary over $\mtc$ the family of maps given by
$$\mtc^4\to\mtc,\qquad (z,u,v,w)\mapsto a(z^2w+zu\overline v)+b(zu\overline w-z^2 v)+c(u^2\overline v+zuw)+d(u^2\overline w-zuv)$$
is an eigenfamily of homogeneous degree $3$ polynomials. This family is not uniformly of complex type, and an element of the family is not of complex type unless $ad-bc=0$.
\item Let $\gamma\in\mtc$, the map
$$\mtc^3\oplus\mtr\mapsto\mtc,\qquad ((z,u,w),t)\mapsto z^2 w + 2\gamma\, zut-\gamma^2 u^2\overline w$$
is a degree $3$ harmonic morphism. It is not of complex type unless $\gamma= 0$.
\end{enumerate}
\end{example}

\begin{remark}
The analog of Theorem~\ref{thm: complex-type} and equation (\ref{eq: hol}) to more general Riemannian manifolds $(M,g)$ would be as follows: Given a family $\mathcal F$ one would consider the equation
\begin{equation}
g_\mtc(P_\gamma \snab{x}^M F, \snab{y}^M G)=0\label{eq: ct-mfd}
\end{equation}
for all $x,y\in M$, $F,G\in M$ and $\gamma$ paths from $x$ to $y$. Here $P_\gamma$ denotes parallel transport. This is equivalent to all parallel translates of gradients from $\mathcal F$ generating a parallel isotropic subspace of $TM\otimes\mtc$. By de Rahm's Theorem the universal cover of $M$ is then a Riemannian product of a K\"ahler manifold and another Riemannian manifold. The family $\mathcal F$ factors over the K\"ahler factor, on which it is a family of holomorphic functions.
\end{remark}

\subsection{Axis of holomorphy}\label{subsec: axis-hol}
In this section we briefly define what it means for a family of functions to be holomorphic in some variables. The given characterisation is geometric, i.e. it does not depend on the choice of a complex structure. We then show Theorem \ref{thm: 4-ct}.
\begin{defn}\label{def: axis}
\begin{enumerate}
\item Let $\mathcal F$ be a family of functions $\mtr^m\to\mtc$. A vector subspace $V\subseteq\mtr^m$ is said to be a \textit{uniform axis of holomorphy} of $\mathcal F$ if the family
$$\{F(x+\cdot):V\to\mtc, v\mapsto F(x+v)\mid x\in\mtr^m, F\in\mathcal F\}$$
is uniformly of complex type.
\item Let $F:\mtr^m\to\mtc$ be a function. A vector subspace $V\subseteq \mtr^m$ is said to be \textit{an axis of holomorphy} of $F$ if $V$ is a uniform axis of holomorphy of $\{F\}$.
\end{enumerate}
\end{defn}

\begin{example}
For all families listed in Example \ref{ex: complex-type} the $\mtc^2$ subspace (of the respective domains) spanned by the variables $z$ and $u$ is a uniform axis of holomorphy.
\end{example}

\begin{remark}
The author does not know \textit{any} examples of polynomial harmonic morphisms $\mtr^m\to\mtc$ without an axis of holomorphy. Numerics indicate that generically the degree $3$ or $4$ homogeneous polynomial harmonic morphisms on $\mtr^6$ have such an axis. In particular these polynomials are generically of complex type by Theorem~\ref{thm: 56-reduc}. In my opinion the question of whether or not an axis must exist is probably the most approachable question when attempting to classify polynomial harmonic morphisms to $\mtc$. It is natural to ask:
\end{remark}

\begin{question}\label{conjecture: axis}
Let $\mathcal F$ be a $(0,0)$-eigenfamily on $\mtr^m$ consisting of homogeneous polynomials. Does $\mathcal F$ admit a non-trivial uniform axis of holomorphy?
\end{question}

We now provide some simple criterions for recognising such an axis. As an immediate consequence of Theorem \ref{thm: complex-type} one sees:

\begin{proposition}
Let $\mathcal F$ be a family of smooth functions from $\mtr^m$ to $\mtc$, and $V\subseteq \mtr^m$ a vector subspace. Denote with $p_V$ the orthogonal projection to $V$. Then $V$ is a uniform axis of holomorphy of $\mathcal F$ if and only if $(\snab{x}F_1)^Tp_V\snab{y} F_2=0$ for all $x,y\in\mtr^m$, $F_1,F_2\in\mathcal F$.
\end{proposition}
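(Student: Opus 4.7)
The plan is to reduce the statement directly to Theorem~\ref{thm: complex-type} applied to the translated-and-restricted family $\mathcal G_{\mathcal F,V} \defeq \{F(x+\cdot)\lvert_V : V\to\mtc \mid x\in\mtr^m,\,F\in\mathcal F\}$, which by Definition~\ref{def: axis} is the family whose uniform complex-type behaviour characterises $V$ being a uniform axis of holomorphy.

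First I would compute the gradient of a typical element $v\mapsto F(x+v)$ of $\mathcal G_{\mathcal F,V}$ at a point $v_0\in V$, where the gradient is taken intrinsically on $V$ equipped with the restricted Euclidean metric. Since $V$ is a Euclidean subspace of $\mtr^m$, chain rule plus the defining property of the orthogonal projection gives $\snab{v_0}^V(F(x+\cdot)) = p_V\,\snab{x+v_0}F$. This is the only non-trivial identification; everything else is linear algebra.

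Next, I would apply Theorem~\ref{thm: complex-type} to the family $\mathcal G_{\mathcal F,V}$ on $V$: it is uniformly of complex type iff for all $F_1,F_2\in\mathcal F$, all $x_1,x_2\in\mtr^m$ and all $v_1,v_2\in V$,
\[
(p_V\,\snab{x_1+v_1}F_1)^T\,(p_V\,\snab{x_2+v_2}F_2)=0.
\]
Using that $p_V$ is symmetric and idempotent, so $p_V^T p_V = p_V$, this collapses to $(\snab{x_1+v_1}F_1)^T\,p_V\,\snab{x_2+v_2}F_2 = 0$. As $(x_1,v_1)$ and $(x_2,v_2)$ range, $x_1+v_1$ and $x_2+v_2$ independently range over all of $\mtr^m$, so this condition is equivalent to $(\snab{x}F_1)^T\,p_V\,\snab{y}F_2=0$ for all $x,y\in\mtr^m$ and $F_1,F_2\in\mathcal F$. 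The reverse direction is trivially obtained by specialising $x,y$ to $x_1+v_1,x_2+v_2$.

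No step is really an obstacle here; the proof is essentially a bookkeeping exercise combining Theorem~\ref{thm: complex-type} with the identification of the intrinsic gradient on $V$. The only point worth stating carefully in the write-up is the identity $\snab{v}^V(F(x+\cdot))=p_V\snab{x+v}F$, since everything else follows mechanically.
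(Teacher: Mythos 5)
Your proof is correct and follows exactly the route the paper intends: the paper states this proposition as an immediate consequence of Theorem~\ref{thm: complex-type} and omits the details, which are precisely your identification $\snab{v}^V(F(x+\cdot))=p_V\snab{x+v}F$ together with $p_V^Tp_V=p_V$ and the observation that $x+v$ ranges over all of $\mtr^m$. Nothing is missing.
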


In particular a one-dimensional subspace $V\subseteq \mtr^m$ is a uniform axis of holomorphy of a family $\mathcal F$ if and only if $p_V\snab{x}F=0$ for all $x\in\mtr^m$, $F\in\mathcal F$. This is equivalent to $F$ factoring over the projection to $V^\perp$ for all $F\in\mathcal F$. The case of a $2$-dimensional axis is more useful and interesting:

\begin{lemma}\label{lemma: dim2-axis}
Let $V\subseteq\mtr^m$, $\dim(V)=2$ and let $\mathcal F$ be a family of functions $\mtr^m\to\mtc$. Then $V$ is a uniform axis of holomorphy of $\mathcal F$ if and only if there exists an orthonormal basis $\{u,v\}$ of $V$ so that any of the following equivalent conditions hold:
\begin{enumerate}[label=(\roman*)]
\item For all $x\in\mtr^m, F\in\mathcal F$ one has $(\snab{x} F)^T(u+iv) = 0$.
\item For all $F\in\mathcal F$ there is a $\lambda_F:\mtr^m\to\mtc$ so that for all $x\in\mtr^m$ one has
$$p_V(\snab{x} F) = \lambda_F(x)(u+iv).$$
\end{enumerate}
\end{lemma}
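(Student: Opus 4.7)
The plan is to lean on the preceding proposition, which turns the ``uniform axis of holomorphy'' condition into isotropy of a subspace of $V\otimes\mtc$, and then reduce to an elementary classification of isotropic lines in a $2$-dimensional symmetric bilinear space. First I would dispense with the equivalence of (i) and (ii): for any fixed orthonormal basis $\{u,v\}$ of $V$, write $p_V(\snab{x}F) = au+bv$ with $a = (\snab{x}F)^T u$ and $b = (\snab{x}F)^T v$. Condition (i) reads $a+ib = 0$ and condition (ii) reads $(a,b)=(\lambda,i\lambda)$ for some $\lambda\in\mtc$; both express the same line membership $p_V(\snab{x}F)\in\mtc\cdot(u+iv)$, with $\lambda_F(x)=(\snab{x}F)^T u$.

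For the main equivalence, the forward direction uses the preceding proposition: $V$ is a uniform axis of holomorphy if and only if $(\snab{x}F_1)^T\, p_V\snab{y}F_2 = 0$ for all $x,y,F_1,F_2$, and by self-adjointness and idempotence of $p_V$, together with $(\mathds1-p_V)\snab{y}F_2\in V^\perp$, this is equivalent to $(p_V\snab{x}F_1)^T(p_V\snab{y}F_2)=0$. Thus the complex linear span $W := \spn_\mtc\{p_V\snab{x}F : x\in\mtr^m, F\in\mathcal F\}$ is an isotropic subspace of $V\otimes\mtc$. The key linear-algebra input is that in a non-degenerate $2$-dimensional symmetric bilinear complex space every isotropic subspace has complex dimension at most $1$, and every isotropic line has the form $\mtc\cdot(u+iv)$ for some orthonormal basis $\{u,v\}$ of $V$: in any chosen orthonormal basis $\{e_1,e_2\}$, an isotropic vector $ae_1+be_2$ satisfies $a^2+b^2=0$, so $b=\pm ia$, and flipping the sign of $e_2$ absorbs the $\pm$. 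If $W=0$ one picks any orthonormal basis and sets $\lambda_F\equiv 0$; otherwise $W$ is exactly one such line, and every projection $p_V\snab{x}F$ is a complex multiple of $u+iv$, giving (ii). Conversely, if (i) holds then all projections lie in the line $\mtc\cdot(u+iv)$, which is isotropic since $(u+iv)^T(u+iv) = \|u\|^2 - \|v\|^2 + 2i\,u^T v = 0$, so the vanishing condition from the proposition holds and $V$ is a uniform axis of holomorphy.

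The main obstacle is rather mild: it amounts to the classification of isotropic lines in a $2$-dimensional complex bilinear space, together with the consistent sign choice $v\mapsto -v$ so that the isotropic direction is expressed with $+i$ rather than $\pm i$. The only bookkeeping that requires attention is the degenerate case $W=\{0\}$ (where any orthonormal basis of $V$ works), and the orthogonal decomposition $\mtr^m = V\oplus V^\perp$ used to drop cross terms between the $V$- and $V^\perp$-parts of $\snab{}F$ when reducing the vanishing identity to its projected form.
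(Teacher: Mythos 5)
Your proof is correct and follows essentially the same route as the paper: both reduce to the vanishing criterion $(\snab{x}F_1)^Tp_V\snab{y}F_2=0$ from the preceding proposition and then exploit that a nonzero isotropic vector in $V\otimes\mtc$ (with $\dim V=2$) singles out an orthonormal basis $\{u,v\}$ with the isotropic direction $u+iv$. The paper extracts $u,v$ as the normalised real and imaginary parts of a particular nonzero projected gradient, whereas you classify isotropic lines in coordinates; this is only a cosmetic difference.
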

\begin{proof}
Note that (ii) clearly implies both (i) and that $V$ is an axis of holomorphy.

On the other hand if $V$ is an axis of holomorphy then either $p_V(\snab{x} F)=0$ for all $x,F$ and then (i) and (ii) follow or there are $y, Q$ with $p_V(\snab{y} Q)\neq0$. Taking $\widetilde u,\widetilde v$ to be the real and imaginary parts of $p_V(\snab{y} Q)$ respectively, one gets from $(\snab{y} Q)^T p_V\snab{y} Q=0$ that they have the same norm and are orthogonal to each other. Then $(\snab{x} F)^Tp_V\snab{y} Q =0$ for all $x, F$ implies (i) for $u=\frac{\widetilde u}{\|\widetilde u\|}, v=\frac{\widetilde v}{\|\widetilde v\|}$.

It remains to show that (i) implies (ii). But this is obvious, since $u+iv, u-iv$ are a complex basis of $V\otimes\mtc$, so there are $\lambda_F, \widetilde \lambda_F:\mtr^m\to\mtc$ with
$$p_V(\snab{x} F)=\lambda_F(x) (u+iv)+\widetilde\lambda_F(x)(u-iv).$$
If $\widetilde\lambda_F(x)\neq0$ for any $x,F$, then (i) fails for this pair $x,F$. Thus $\widetilde\lambda_F=0$ for all $F\in\mathcal F$ and (ii) follows. 
\end{proof}

A result of Baird and Wood states that every globally defined harmonic morphism $\mtr^3\to\mtc$ is of complex type, c.f. Theorem 4.1 in \cite{baird-wood-88}. Wood \cite{wood-92} has shown that the same for harmonic morphisms $\mtr^4\to\mtc$, see e.g. Theorem 7.11.6 in \cite{baird-wood-book}. We extend these results to $(0,0)$-eigenfamilies, using the notion of an axis of holomorphy and Lemma \ref{lemma: dim2-axis}.

\pasttheorem{thm: 4-ct}
\begin{proof}
It suffices to prove the statement for $m=4$. For $F\in\mathcal F$ define $V_F\defeq \spn_\mtc\{\snab{x}F\mid x\in\mtr^4\}$. Then, since $F$ is of complex type, $V_F$ is an isotropic subspace of $\mtr^4\otimes\mtc$ and so $\dim_\mtc(V_F)\in\{0,1,2\}$. Without loss of generality we assume $\dim_\mtc(V_F)\neq0$ for all $F\in\mathcal F$.

We first show that if $\mathcal F$ admits a uniform axis of holomorphy of real dimension $2$ that it is then uniformly of complex type.

By Lemma \ref{lemma: dim2-axis} the existence of such an axis is equivalent to a non-zero isotropic vector $w\in\mtr^4\otimes\mtc$ so that $(\snab{x}F)^T\,w=0$ for all $x\in\mtr^4, F\in\mathcal F$. This $w$ annihilates all $V_F$ and so $V_F+\spn_\mtc\{w\}$ is isotropic for all $F\in\mathcal F$, it is then of dimension $\leq 2$. For any $F\in\mathcal F$ there is then an isotropic vector $w(F)\perp w$ (possibly $w(F)=0$) so that $V_F+\spn_\mtc\{w\}=\spn_\mtc\{w,w(F)\}$. For any $F\in\mathcal F$ there are then functions $\lambda_F,\mu_F:\mtr^4\to\mtc$ for which:
$$\snab{x}F = \lambda_F(x) w + \mu_F(x) w(F).$$
The functions $\mu_F$ can be chosen to be of complex type (since if $w(F)\neq0$ one has $\mu_F(x) = \frac1{\|w(F)\|}(\snab{x}F)^T\,\overline{w(F)}$). In particular each such function is either non-zero on an open dense set or constant zero. Using the fact that $\spn_\mtc\{w,w(F)\}$ is isotropic one gets for all $x\in\mtr^4$ and $F,G\in\mathcal F$ that:
$$(\snab{x}F)^T\,\snab{x}G = \mu_F(x)\mu_G(x)\ w(F)^Tw(G) =0,$$
implying that whenever both of $\mu_F,\mu_G$ are not the zero function that $w(F)^Tw(G)=0$. One then sees:
$$(\snab{x}F)^T\,\snab{y}G = \mu_F(x)\mu_G(y)\, w(F)^Tw(G) =0$$
for all $x,y\in\mtr^4, F,G\in\mathcal F$ and $\mathcal F$ is uniformly of complex type.

For the second step we show that if there is a $F\in\mathcal F$ so that $\dim(V_F)=1$, that $V_F$ is a uniform axis of holomorphy for $\mathcal F$:

There is then a non-zero isotropic vector $w\in\mtr^4\otimes\mtc$ and a function $\lambda:\mtr^4\to\mtc$ so that $\snab{x}F=\lambda(x) w$. As before $\lambda$ is non-zero on an open dense set, so $(\snab{x}G)^T\,\snab{x}F=0$ implies $(\snab{x}G)^T\,w=0$ for all $x\in\mtr^4$, $G\in\mathcal F$. $V_F$ is then a uniform axis of holomorphy for $\mathcal F$ by Lemma \ref{lemma: dim2-axis}.

In the final step we suppose $\dim_\mtc(V_F)=2$ for all $F\in\mathcal F$. We then suppose that $\mathcal F$ is not uniformly of complex type, i.e. that there exist $x,y\in\mtr^4$, $F,G\in\mathcal F$ for which:
$$(\snab{x}F)^T\,\snab{y}G\neq0.$$
This will lead to a contradiction. We choose an orthonormal basis $w_1,w_2$ of $V_F$, then there are functions $\lambda_1,\lambda_2,\mu_1,\mu_2,\mu_3,\mu_4:\mtr^4\to\mtc$ so that:
$$\snab{x}F=\lambda_1(x)w_1+\lambda_2(x)w_2,\quad \snab{x}G=\mu_1(x)\overline{w_1}+\mu_2(x)\overline{w_2} + \mu_3(x){w_1}+\mu_4(x) w_2.$$
The condition $(\snab{x}F)^T\,\snab{x}G=0$ is then equivalent to $\frac{\lambda_1(x)}{\lambda_2(x)}=-\frac{\mu_2(x)}{\mu_1(x)}$ whenever both sides are defined (which we may assume to be almost everywhere).

Since $\lambda_i(x) = (\snab{x}F)^T\,\overline{w_i}$ the quotient $\frac{\lambda_1(x)}{\lambda_2(x)}$ is holomorphic with respect to the same complex structure that makes $F$ holomorphic. In particular its gradient (which is not constant zero) will lie in $V_F$. Similarly the gradient of $-\frac{\mu_2(x)}{\mu_1(x)}$ lies in $V_G$. Since these two functions are however equal one has that $V_F\cap V_G\neq\{0\}$; and so $\{F,G\}$ admits a uniform axis of holomorphy of (real) dimension $2$. By the first step of the proof $\{F,G\}$ is then uniformly of complex type, contradicting the assumption that there are $x,y\in\mtr^4$ so that $(\snab{x}F)^T\,\snab{y}G\neq0$.
\end{proof}

\subsection{Reduction along axis}\label{subsec: reduction}
In this section we remark on a reduction principle for homogeneous polynomial harmonic morphisms that admit an axis of holomorphy (of dimension at least $2$), see Proposition \ref{prop: reduction-axis}. The reduction decreases the dimension of the domain while preserving the property of being a harmonic morphism, at the cost of potentially losing homogeneity.

Since $(0,0)$-eigenfamilies in dimension $\leq4$ are well understood by Theorem \ref{thm: 4-ct}, this yields Theorem \ref{thm: 56-reduc} and Proposition \ref{prop: 9-reduc}.

\begin{defn}
Let $m,d\in\mtn$ and let $F:\mtc\oplus\mtr^m\to\mtc$ be a homogeneous degree $d$ polynomial that is holomorphic in the $\mtc$ factor. Define \textit{the reduction of $F$ along $\mtc$} by:
$$\rho(F):\mtr^m\to\mtc,\qquad x\mapsto F(1,x),$$
which is a polynomial of degree $\leq d$, not necessarily homogeneous.
\end{defn}
\begin{proposition}\label{prop: reduction-axis}
Let $m,d\in\mtn$ and let $\mathcal F$ be a family of homogeneous degree $d$ polynomials from $\mtc\oplus\mtr^m$ to $\mtc$ that are holomorphic in the $\mtc$ factor. Then $\mathcal F$ is an eigenfamily if and only if $\{\rho(F)\mid F\in\mathcal F\}$ is.
\end{proposition}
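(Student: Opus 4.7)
The plan is to verify the defining equations of an eigenfamily directly, exploiting both the homogeneity of the $F_j$ and their holomorphy in the $\mtc$ factor. First I would reduce to checking only the gradient isotropy condition $(\nabla F_i)^T \nabla F_j \equiv 0$: by the Corollary following Theorem~\ref{thm: Sn-eigen}, a family of polynomials is a $(0,0)$-eigenfamily if and only if this condition holds, since weak horizontal conformality forces harmonicity for polynomial maps. Next, holomorphy in the $\mtc$ factor annihilates its contribution to the inner product of gradients: writing $z = z_1 + iz_2$, the Cauchy--Riemann equations give $\partial_{z_2} F = i \partial_{z_1} F$, so $\partial_{z_1} F_i \, \partial_{z_1} F_j + \partial_{z_2} F_i \, \partial_{z_2} F_j = 0$, and therefore $(\nabla F_i)^T \nabla F_j = \sum_{l=1}^m \partial_{x_l} F_i \cdot \partial_{x_l} F_j$.

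The key step is to use homogeneity together with holomorphy to rewrite $F_j$ in terms of $\rho(F_j)$. Expanding $F_j(z,x) = \sum_k z^k G_{j,k}(x)$, the homogeneity of $F_j$ of degree $d$ forces each $G_{j,k}$ to be a homogeneous polynomial of degree $d-k$ on $\mtr^m$. Extending $\rho(F_j) = \sum_k G_{j,k}$ tautologically to a polynomial on $\mtc^m$, a direct calculation yields the scaling identity
\[ F_j(z,x) = z^d \, \rho(F_j)(x/z), \qquad z \neq 0. \]
Differentiating in $x_l$ gives $\partial_{x_l} F_j(z,x) = z^{d-1} (\partial_{x_l} \rho(F_j))(x/z)$, and combining with the reduction from the previous step,
\[ (\nabla F_i)^T \nabla F_j (z,x) = z^{2d-2} \cdot (\nabla \rho(F_i))^T \nabla \rho(F_j)(x/z). \]

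From this identity both directions of the equivalence follow immediately. If $\mathcal F$ is an eigenfamily, evaluating at $z = 1$ yields $(\nabla \rho(F_i))^T \nabla \rho(F_j) \equiv 0$ on $\mtr^m$. Conversely, if that identity holds on $\mtr^m$ then it is a polynomial identity and so extends to all of $\mtc^m$; the right-hand side above then vanishes for all $z \neq 0$, and since $(\nabla F_i)^T \nabla F_j$ is itself a polynomial vanishing on a non-empty open set it vanishes everywhere. The only mild technical point is the harmless extension of $\rho(F_j)$ to a polynomial on $\mtc^m$ so that the substitution $x \mapsto x/z$ is meaningful; no genuine obstacle arises.
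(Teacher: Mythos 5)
Your proof is correct and follows essentially the same route as the paper: both expand $F(z,x)=\sum_k z^k G_k(x)$, use holomorphy in the $\mtc$ factor to discard its contribution to $(\nabla F_i)^T\nabla F_j$, and observe that the conformality condition for $F$ encodes exactly the conformality condition for $\rho(F)$. The only cosmetic difference is that the paper concludes by matching bihomogeneous components degree by degree, whereas you package the same information into the rescaling identity $(\nabla F_i)^T\nabla F_j(z,x)=z^{2d-2}\,(\nabla\rho(F_i))^T\nabla\rho(F_j)(x/z)$ and finish with a density argument.
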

\begin{proof}
For $F,G\in\mathcal F$ we write with respect to the decomposition $\mtc\oplus\mtr^m$:
$$F(z,x)=\sum_{k=0}^dz^{d-k}F_{k}(x),\qquad G(z,x)=\sum_{k=0}^dz^{d-k}G_{k}(z,x)$$
where $F_k,G_k$ are homogeneous degree $k$ polynomials $\mtr^m\to\mtc$. In particular $\rho(F)(x)=\sum_{k=0}^dF_k(x)$. Then $\mathcal F$ is an eigenfamily if and only if:
\begin{equation}
(\snab{(z,x)} F)^T\, \snab{(z,x)}G=\sum_{k=2}^{2d} z^{2d-k}\sum_{i,j:i+j=k}(\snab{x}F_i)^T\, \snab{x} G_j = 0\label{eq: red-1}
\end{equation}
for all $(z,x)$ and $F,G\in\mathcal F$. On the other hand $\{\rho(F)\mid F\in\mathcal F\}$ is an eigenfamily if and only if
\begin{equation}
(\snab{x}\rho(F))^T\, (\snab{x}\rho(G))=\sum_{k=2}^{2d}\ \sum_{i,j:i+j=k}(\snab{x}F_i)^T\, \snab{x}G_j=0\label{eq: red-2}
\end{equation}
for all $x\in\mtr^m$ and $F,G\in\mathcal F$. By comparing the degrees in $x$ one sees that equations (\ref{eq: red-1}), (\ref{eq: red-2}) are equivalent. The proposition then follows.
%
\end{proof}

Combining Theorem \ref{thm: 4-ct} with the reduction procedure then immediately yields:

\pasttheorem{thm: 56-reduc}
\begin{proof}
Identifying $\mtr^m\cong\mtc\oplus\mtr^{m-2}$ we may then assume the axis of holomorphy to be the $\mtc$ factor. Reducing along this axis as in Proposition \ref{prop: reduction-axis} then results in a polynomial eigenfamily on $\mtr^{m-2}$, which is then of complex type by Theorem \ref{thm: 4-ct}. For each $F\in\mathcal F$ one may write $\rho(F)=\sum_k F_k$ as a sum of homogeneous polynomials, all of different degrees. Since $\{\rho(F)\mid F\in\mathcal F\}$ is uniformly of complex type it follows that $\{F_k \mid k\in\mtn, F\in\mathcal F\}$ is also uniformly of complex type.

Undoing the reduction then implies that $\mathcal F$ is uniformly of complex type.
\end{proof}

\begin{remark}
The statement of Theorem \ref{thm: 56-reduc} does not extend to non-homogeneous polynomials, as the example
$$\mtc^2\oplus\mtr\to\mtc,\qquad ((z,w),t)\mapsto z^2w+ 2\gamma z t - \gamma^2 \overline w$$
(due to Ababou, Baird, and Brossard \cite{ABB-99}) shows, here $\gamma\in\mtc$. Similarly it doesn't extend to domain $\mtr^7$, as demonstrated by inflating the previous example:
$$\mtc^3\oplus\mtr\to\mtc,\qquad ((z,u,w),t)\mapsto z^2w+2\gamma zu t-\gamma^2 u^2\overline w.$$
\end{remark}

By the reduction procedure Proposition \ref{prop: reduction-axis}, homogeneous polynomial harmonic morphisms $\mtr^m\to\mtc$ without an axis of holomorphy contain in some sense information that appears for the first time in dimension $m$. Another way in which a harmonic morphism $F:\mtr^m\to\mtc$ might originate from a combination of lower dimensional data is if there is a decomposition $\mtr^m=V\oplus V^\perp$ with respect to which $F$ is a \textit{harmonic morphism in each variable separately}. This means that for all $(x_0,y_0)\in V\oplus V^\perp$ the maps
$$V\to \mtc, \quad x\mapsto F(x,y_0),\qquad V^\perp\mapsto \mtc, \quad y\mapsto F(x_0,y),$$
are harmonic morphisms.

The content of the next proposition is that, in low dimensions, polynomials without an axis of holomorphy can never be written as a harmonic morphism in each variable separately.
\begin{proposition}\label{prop: 9-reduc}
Let $m\leq 9$ and suppose $F:\mtr^m\to\mtc$ is a homogeneous polynomial harmonic morphism not admitting a non-trivial axis of holomorphy. Then there is no non-trivial decomposition $\mtr^m\cong V\oplus V^\perp$ with respect to which $F$ is a harmonic morphism in each variable separately. 
\end{proposition}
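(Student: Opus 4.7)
The plan is to argue by contradiction. Suppose a non-trivial decomposition $\mtr^m \cong V \oplus V^\perp$ exists with $F$ a harmonic morphism in each variable separately. I will aim to show that this forces $F$ to admit a non-trivial uniform axis of holomorphy, contradicting the assumption on $F$. After swapping $V$ and $V^\perp$ if necessary, one may assume $k := \dim V \leq \lfloor m/2\rfloor \leq 4$.

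The key observation is that for each $y_0 \in V^\perp$ the slice $F(\cdot,y_0) : V \cong \mtr^k \to \mtc$ is a polynomial harmonic morphism on $\mtr^k$ with $k \leq 4$, hence is of complex type by Baird-Wood \cite{baird-wood-88} (for $k=3$) and Wood \cite{wood-92} (for $k=4$); the cases $k=1,2$ are elementary. By Theorem~\ref{thm: complex-type} the gradients $\{\nabla_x F(x,y_0) : x \in V\}$ span an isotropic subspace $W(y_0) \subseteq V\otimes\mtc$. The strategy is then to show that all $W(y_0)$ are contained in a single isotropic subspace $W^\star \subseteq V\otimes\mtc$; once established, Theorem~\ref{thm: complex-type} immediately yields that $V$ itself is a non-trivial uniform axis of holomorphy of $F$.

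For $k=1$ each slice is constant, so $F$ is independent of $V$. For $k=2$, identifying $V\cong\mtc$, each slice is holomorphic or anti-holomorphic in the complex coordinate $z$; the subsets of $V^\perp$ on which each of these cases holds are Zariski closed with union $V^\perp$, and irreducibility of $V^\perp$ forces one of them to be the whole space, giving $V$ as a $2$-dimensional axis. For $k \in \{3,4\}$, the polynomial map $\nabla_x F : V\oplus V^\perp \to V\otimes\mtc$ has the property that for each fixed $y_0$ the values $\{\nabla_x F(x,y_0) : x\in V\}$ span a subspace of $V\otimes\mtc$ of dimension at most $1$ (resp.\ $2$); a gcd argument in the UFD $\mtc[y]$ then yields a polynomial factorization $\nabla_x F(x,y) = p(x,y)\, w(y)$ for $k=3$, and an analogous polynomial parametrization of a $2$-dimensional isotropic subspace for $k=4$, with $w$ polynomial and valued in the isotropic cone of $V\otimes\mtc$.

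The crux is to establish that $w(y)$ is projectively constant, so that $W^\star := \mtc\cdot w$ is a well-defined fixed isotropic line (resp.\ plane). Integrating gives $F(x,y) = \tilde F(w(y)\cdot x,\, y)$ with $\tilde F$ polynomial, and using the homogeneity of $F$ to bound the $t$-degree of $\tilde F$ (where $t = w(y)\cdot x$) together with the hypothesis $\nabla_y F\cdot\nabla_y F = 0$ leads, at leading order in $t$, to polynomial identities of the form $\sum_j v_j(y)\, v_j(y)^T \equiv 0$ with $v_j(y) = n\, c(y)\, \partial_{y_j} w(y) + (\partial_{y_j}c)(y)\, w(y)$, where $c$ is the top coefficient of $\tilde F$ in $t$. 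Combined with $w^T w = 0$ and its successive differentiated forms, this should force $\partial_{y_j} w \in \mtc\cdot w$ for every $j$. When $\dim V^\perp \leq 4$ one has the simpler alternative of running the symmetric argument on the slices $F(x_0,\cdot)$, which are also of complex type on $V^\perp$ by Baird-Wood/Wood, and matching the two representations $F = \tilde F(w(y)\cdot x,y) = \tilde G(\tilde v(x)\cdot y,x)$; for the remaining cases with $\dim V^\perp \in \{5,6\}$ one may instead apply Theorem~\ref{thm: 56-reduc} to the family $\{F(x_0,\cdot) : x_0\in V\}$ on $V^\perp$, once a $2$-dimensional uniform axis of holomorphy for this family is exhibited from the factorization above. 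I expect the main obstacle to be exactly this projective-constancy step — weaving together the isotropy of $w$, the polynomial identity arising from $\nabla_y F\cdot\nabla_y F = 0$, and the homogeneity of $F$ — particularly in dimensions $m\in\{8,9\}$, where the direct Baird-Wood approach on the $V^\perp$-side slices is no longer available.
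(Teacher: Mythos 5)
Your overall skeleton matches the paper's: assume such a decomposition exists, arrange $\dim V\leq 4$, observe that every slice $x\mapsto F(x,y_0)$ is a polynomial harmonic morphism on $V$ and hence of complex type by Baird--Wood \cite{baird-wood-88} and Wood \cite{wood-92}, and then try to promote ``complex type slice by slice'' to ``$V$ is a uniform axis of holomorphy''. The problem is that you never carry out the promotion. The entire content of the proposition sits in the step you yourself label the crux --- showing the isotropic subspaces $W(y_0)\subseteq V\otimes\mtc$ all fit inside one fixed isotropic subspace --- and for $\dim V\in\{3,4\}$ you offer only a programme (``a gcd argument \dots should force $\partial_{y_j}w\in\mtc\cdot w$'', ``I expect the main obstacle to be exactly this projective-constancy step''). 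As written this is a plan rather than a proof. Moreover your fallback for $\dim V^\perp\in\{5,6\}$ via Theorem~\ref{thm: 56-reduc} presupposes exhibiting a $2$-dimensional uniform axis for the family of $V^\perp$-slices, which is again precisely the uniformity statement you are trying to establish, so the case analysis does not close.

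The paper settles this step by homogeneity rather than by any differential identity, and you should compare. Write $F(x,y)=\sum_{k=0}^{\deg(F)} A_k(x)B_{\deg(F)-k}(y)$ with $A_k, B_k$ homogeneous of degree $k$, and pick a single $y_0$ with $B_k(y_0)\neq0$ for all relevant $k$. The slice at $y_0$ is then a sum of homogeneous polynomials of pairwise distinct degrees, and for such a sum being of complex type forces the family of summands $\{A_1,\dots,A_{\deg(F)}\}$ to be \emph{uniformly} of complex type: rescale $x\mapsto tx$, $y\mapsto sy$ in (\ref{eq: hol}) and compare coefficients of $t^{k-1}s^{l-1}$. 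Since $\snab{x}\bigl(A_k(\cdot)B_{\deg(F)-k}(y)\bigr)=B_{\deg(F)-k}(y)\,\snab{x}A_k$ points in a $y$-independent direction, the single isotropic subspace $\spn_\mtc\{\snab{x}A_k\mid x,k\}$ obtained from that one slice contains $\snab{x}F(\cdot,y)$ for \emph{every} $y$, and $V$ is a uniform axis of holomorphy --- the desired contradiction, with no projective-constancy analysis needed. Note that this mechanism leans on each bidegree component being a single product $A_k(x)B_{\deg(F)-k}(y)$; the possible $y$-dependence of $W(y_0)$ that worries you is exactly what this product form eliminates, so if you want to complete your version you must either justify such a decomposition or supply the missing $y$-independence argument directly. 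Your $k\leq 2$ cases are fine, but they are not where the difficulty lives.
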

\begin{proof}
Suppose $\mtr^m=V\oplus V^\perp$ is a decomposition with respect to which $F$ is a harmonic morphism in each variable separately. Without loss of generality we assume $\dim(V)\in\{1,2,3,4\}$. We then find for $k\in\{0,..,\deg(F)\}$ homogeneous polynomials $A_k:V\to\mtc$, $B_k:V^\perp\to\mtc$ of degree $k$ so that for $(x,y)\in V\oplus V^\perp$:
$$F(x,y)=\sum_{k=0}^{\deg(F)} A_k(x) B_{\deg(F)-k}(y).$$
Without loss of generality we may assume that none of the maps $B_0,..., B_{\deg(P)}$ are identically zero, then there is a $y_0\in V^\perp$ so that $B_k(y_0)\neq0$ holds simultaneously for all $k$. Since $F$ is a harmonic morphism in both variables separately one gets that the map
$$V\to\mtc,\qquad x\mapsto \sum_{k=0}^{\deg(F)} A_k(x) B_{\deg(P)-k}(y_0)$$
is a harmonic morphism. Since $\dim(V)\leq 4$ it is of complex type. Then the summands $\{A_1,...,A_{\deg(F)}\}$ (which are all of different degrees) are uniformly of complex type. It follows that $V$ is an axis of holomorphy of $F$, which was not permitted.
\end{proof}

\section{Eigenfamilies of degree $2$}\label{sec: deg2}

In this section we discuss eigenfamilies consisting of homogeneous polynomial harmonic morphisms of degree $2$.  We begin by showing that any such family must have a uniform axis of holomorphy of dimension at least $2$.

Subsections \ref{subsec: deg2-statement} and \ref{subsec: deg2-proof} then deal with the classification of pairs of homogeneous degree $2$ polynomials $\{F_1, F_2\}$ that constitute an eigenfamily, see Theorem \ref{thm: deg2-eigenpairs}.

\pasttheorem{thm: deg2-axis}
\begin{proof}
Let $A_1,.., A_k$ be symmetric matrices so that the maps $x\mapsto x^TA_jx$, $j\in\{1,...,k\}$ form a basis of $\spn_\mtc(\mathcal F)$. Note that $\mathcal F$ being an eigenfamily is equivalent to
$$A_i A_j + A_j A_i =0$$
for all $i,j\in\{1,...,k\}$, including $i=j$.

Let $A_{i_1}\cdots A_{i_\ell}\neq0$ be so that $A_j(A_{i_1}\cdots A_{i_\ell})=0$ for all $j\in\{1,...,k\}$. By the anti-commutator relation above such a product exists. If $\ell\neq0$ then the range of $A_{i_1}\cdots A_{i_\ell}$ is non-zero and consists of isotropic vectors that annihilate all of the $A_j$, by Lemma \ref{lemma: dim2-axis} $\mathcal F$ then admits an axis of holomorphy of real dimension $2$. If $\ell=0$ then all of the $A_j$ vanish and the theorem follows anyway.
%
\end{proof}
\subsection{Degree $2$ eigenfamilies with $2$ elements}\label{subsec: deg2-statement}

\begin{defn}
Let $\mathcal F$ be a family of smooth functions $\mtr^m\to\mtc$. A uniform axis of holomorphy $V$ of $\mathcal F$ is called \textit{maximal} if it is not contained in any other uniform axis of holomorphy.
\end{defn}

It is clear that any family of functions $\mtr^m\to\mtc$ admits a maximal uniform axis of holomorphy, and that the dimension of this axis is always the same. 

For economy of notation we will assume all families to be \textit{full}:

\begin{defn}
A family $\mathcal F$ of functions $\mtr^m\to\mtc$ is called \textit{full} if there is no proper subspace $V\subset\mtr^m$ so that $F= F\circ p_V$ for all $F\in \mathcal F$, where $p_V:\mtr^m\to\mtr^m$ denotes the orthogonal projection to $V$.
\end{defn}

In this section we provide a classification of all pairs $F_1, F_2:\mtr^m\to\mtc$ of homogenous degree $2$ polynomials so that $\{F_1,F_2\}$ is an eigenfamily. To this end we introduce some notation:

\begin{defn}\label{def: deg2-data}
\begin{enumerate}
\item We call a triple $(n,k,\delta)$ of natural numbers a \textit{subspace type} if $n\geq k$, $k$ is even, and $\delta\in\{0,1\}$.
\item We call a triple $(P_1,P_2, A)$ \textit{polynomial data} of a subspace type $(n,k,\delta)$ if $P_1,P_2\in\mtc[z_1,...,z_n]$ are homogenous complex polynomials of degree $2$ and $A$ is a complex $n\times k$ matrix of rank $k$.
\item We call a triple $(Y,C,v)$ \textit{twisting data} of a subspace type $(n,k,\delta)$ if $Y, C$ are anti-symmetric $k\times k$ matrices, $Y$ is invertible, and if $v\in\mtc^k$ with $v=0$ if and only if $\delta=0$.
\end{enumerate}
\end{defn}

\pasttheorem{thm: deg2-eigenpairs}

\begin{example}The following are minimal non-trivial examples of such eigenpairs:
\begin{enumerate}[label=(\roman*)]
\item $F_1, F_2:\mtc^4\to\mtc$ given by
$$F_1(z,u,v,w)= zv+uw,\qquad  F_2(z,u,v,w)=z\overline w - z\overline v.$$
Here
$$(n,k,\delta)=(2,2,0),\quad (P_1,P_2,A)=(0,0,\begin{pmatrix}1&0\\0&1\end{pmatrix}),\quad (Y,C,v)=(\begin{pmatrix}0&1\\-1&0\end{pmatrix},0,0).$$
\item $F_1, F_2:\mtc^4\oplus\mtr\to\mtc$ given by
$$F_1(z,u,v,w,t)= zv+uw,\qquad F_2(z,u,v,w,t)= z(\overline w +w +2it)-u\overline v.$$
Here $\delta =1$ and $v=\begin{pmatrix}2\\0\end{pmatrix}$, the other data are as in the previous example.
\end{enumerate}
\end{example}
\begin{remark}
More generally if $M_1,...,M_k$ are symmetric complex $m\times m$ matrices one notes that the the maps $x\mapsto x^TM_i x$ form a degree $2$ $(0,0)$-eigenfamily iff and only if
$$\{M_i,M_j\}=0\ \forall i,j.$$
This is the same data as a representation of the exterior algebra $\rho:\Lambda(\mtc^k)\to \End(\mtr^m)$ for which the generators $\Lambda^1(\mtc^k)$ are sent to symmetric matrices. This is superficially similar to the case for quadratic harmonic morphisms $\mtr^m\to\mtr^n$, which can be classified via the theory of orthogonal representations of Clifford algebras, which is particularly simple. However unlike the Clifford algebras the exterior algebra $\Lambda(\mtc^k)$ is \emph{wild} if $k\geq3$ (since it contains a wild quotient, cf. e.g. \cite{ringel-74}). Since the representation theory of wild algebras is generally regarded as an impossible problem one is unlikely to profit from this approach.
\end{remark}
\begin{remark}
The previous remark does not mean that the classification of e.g. degree $2$ homogeneous $(0,0)$-eigentriples $\{P,Q,O\}$ should be regarded as impossible. As an example the free algebra of two generators is wild, but its representations are easily parametrised by two matrices. It is the problem of classifying the indecomposable representations that is hard.
\end{remark}
%

\subsection{Proof of Theorem \ref{thm: deg2-eigenpairs}}\label{subsec: deg2-proof}

Let $F_1, F_2:\mtr^m\to\mtc$ be two homogenous polynomials of degree $2$ so that $\{F_1,F_2\}$ is full. Let $V$ be a maximal axis of holomorphy for $\{F_1,F_2\}$. Without loss of generality we may apply an isometry to the domain to identify $\mtr^m\cong \mtc^n\oplus \mtr^a$ so that $V$ is identified with the $\mtc^n$ factor. Here $a=m-2n$.

With respect to this decomposition the homogenous degree $2$ polynomials $F_1, F_2$ must be of the form:
\begin{align*}
F_1( (z_1,...,z_n), (x_1,...,x_a)) = P_1(z)+\sum_i z_i\, \xi_i^T x + x^T B_1x,\\
F_2((z_1,...,z_n), (x_1,...,x_a)) = P_2(z)+\sum_i z_i\, \eta_i^Tx+ x^TB_2x.
\end{align*}
Here we introduced $P_1, P_2\in\mtc[z_1,...,z_n]$ two homogenous degree $2$ polynomials, $\xi_i,\eta_i \in\mtr^a\otimes\mtc$ for $i\in\{1,...,n\}$ and $B_1, B_2$ two symmetric complex $a\times a$ matrices. 

The conditions that  $\{F_1, F_2\}$ is a $(0,0)$-eigenfamily become:
\begin{align}
\xi_i^T\xi_j=0=\eta_i^T\eta_j\ \forall i,j,& &B_1^2=0=B_2^2,&  & B_1\xi_i=0=B_2\eta_i\ \forall i,\label{eq: deg2-hm}\\
\xi_i^T\eta_j+\xi_j^T\eta_i=0 \ \forall i,j,& &B_1B_2+B_2B_1=0,& & B_1 \eta_i + B_2\xi_i=0\ \forall i. \label{eq: deg2-family}
\end{align}
Here equations (\ref{eq: deg2-hm}) are the equations that $F_1,F_2$ are $(0,0)$-eigenfunctions, and (\ref{eq: deg2-family}) are the additional conditions necessary for $\{F_1,F_2\}$ to form a $(0,0)$-eigenfamily.
%

For convenience we record the following argument, which we will use often:
\begin{lemma}\label{lemma: deg2-isotropic}
Let $x\in\mtr^a\otimes\mtc$ be isotropic so that
$$x^T\xi_i=0=x^T\eta_i\ \forall i,\qquad B_1x=0=B_2x,$$
then $x=0$.
\end{lemma}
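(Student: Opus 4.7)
The plan is to derive a contradiction with the maximality of $V = \mtc^n$ as a uniform axis of holomorphy. Assume for contradiction that $x \in \mtr^a\otimes\mtc$ satisfies all the stated hypotheses and is non-zero. Writing $x = u + iv$ with $u, v \in \mtr^a$, the isotropy condition $x^Tx = 0$ forces $\|u\| = \|v\| \neq 0$ and $u \perp v$, so $W \defeq \spn_\mtr(u,v) \subset \mtr^a$ is a genuinely $2$-dimensional real subspace orthogonal to $V$. The goal is to show that $V' \defeq V \oplus W \subset \mtr^m$ is again a uniform axis of holomorphy for $\{F_1, F_2\}$; since $V' \supsetneq V$, this contradicts the maximality of $V$ and forces $x = 0$.

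To verify that $V'$ is an axis of holomorphy, I would use the characterization from the proposition following Theorem \ref{thm: complex-type}: one needs $(\snab{y} F_\alpha)^T\, p_{V'} \snab{y'} F_\beta = 0$ for all $y, y'$ and $\alpha,\beta\in\{1,2\}$. Since $V' = V \oplus W$ is an orthogonal decomposition, $p_{V'} = p_V + p_W$, and the $p_V$ contribution already vanishes by hypothesis on $V$. Everything then reduces to the $p_W$ piece, which only sees the $\mtr^a$-component of the gradients, namely $\snab{w} F_1 = \sum_i z_i \xi_i + 2 B_1 w$ and $\snab{w} F_2 = \sum_i z_i \eta_i + 2 B_2 w$ (using symmetry of $B_\alpha$).

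The central observation is that the hypotheses on $x$ give $x^T \snab{w} F_\alpha = 0$ identically in $(z,w)$: the conditions $x^T\xi_i = x^T\eta_i = 0$ kill the $z$-linear part, while $B_\alpha x = 0$ combined with symmetry of $B_\alpha$ gives $x^T B_\alpha w = (B_\alpha x)^T w = 0$, killing the $w$-linear part. Now in $W \otimes \mtc = \mtc x \oplus \mtc \bar x$ the only two isotropic lines are $\mtc x$ and $\mtc \bar x$, and the vanishing $x^T \snab{w} F_\alpha = 0$ forces $p_W \snab{w} F_\alpha$ to have no $\bar x$-component, hence to lie in $\mtc x$. Since $x$ is isotropic, both $p_W \snab{y} F_\alpha$ and $p_W \snab{y'} F_\beta$ lie in the isotropic line $\mtc x$, so their product under the complex bilinear form vanishes. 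This completes the verification of the isotropy criterion and yields the contradiction.

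The argument is mostly linear algebra once the right reformulation is in place. I expect the main conceptual step to be the recognition that the stated hypotheses are exactly the infinitesimal conditions for $x$ to serve as a new direction extending the axis of holomorphy; phrased this way, the lemma is the assertion that maximality of $V$ precludes any further isotropic direction in $\mtr^a\otimes\mtc$ that simultaneously annihilates the $\xi_i,\eta_i$ and the $B_\alpha$.
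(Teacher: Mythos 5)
Your proposal is correct and follows essentially the same route as the paper: show that the hypotheses force $x$ to annihilate every gradient of $F_1$ and $F_2$, so that $W=\spn_\mtr(\mathrm{Re}\,x,\mathrm{Im}\,x)$ is a uniform axis of holomorphy and $V\oplus W$ contradicts maximality of $V$. The only cosmetic difference is that you verify the isotropy criterion for $V\oplus W$ by hand, where the paper invokes Lemma \ref{lemma: dim2-axis} directly.
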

\begin{proof}
Since the gradients of $F_1$ are spanned by $\xi_i$, the range of $B_1$, and the $\mtc^m$ factor it follows that $x$ annihilates every gradient of $F_1$. Similarly it annihilates every gradient of $F_2$. By Lemma~\ref{lemma: dim2-axis} $V_x\defeq\spn_\mtr(\{\mathrm{Re}(x),\mathrm{Im}(x)\})\subseteq \mtr^a\otimes\mtc$ is a uniform axis of holomorphy for $\{F_1,F_2\}$. Clearly $\mtc^n\oplus V_x$ is then a uniform axis for $\{F_1,F_2\}$, if $x\neq0$ this contradicts the maximality of the $\mtc^n$ factor.
\end{proof}

\begin{lemma}\label{lemma: deg2-nonholsquare}
If $B_1, B_2$ are as above, then
$B_1=B_2=0$.
\end{lemma}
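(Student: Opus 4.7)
The plan is to apply Lemma \ref{lemma: deg2-isotropic} in a three-step bootstrap, each step extracting a stronger vanishing statement about $B_1$ and $B_2$ from the anti-commutation identities in (\ref{eq: deg2-hm}) and (\ref{eq: deg2-family}). The recurring mechanism is that any vector of the form $Mz$, where $M$ is a product of the matrices $B_1, B_2$, is automatically isotropic (since $B_i$ is symmetric and $B_i^2 = 0$), so the task in each step reduces to checking the four remaining hypotheses of the lemma: being annihilated by $B_1$, by $B_2$, by $\xi_i^T(\cdot)$, and by $\eta_i^T(\cdot)$.

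First I would show $B_1 B_2 = B_2 B_1 = 0$. For arbitrary $z \in \mtr^a \otimes \mtc$, set $w = B_1 B_2 z$. Then $B_1 w = 0$ from $B_1^2 = 0$, and $B_2 w = B_2 B_1 B_2 z = -B_1 B_2^2 z = 0$ using $\{B_1, B_2\} = 0$ and $B_2^2 = 0$. Isotropy of $w$ follows from $w^T w = z^T B_2 B_1^2 B_2 z = 0$. Finally $w^T \xi_i = z^T B_2 B_1 \xi_i = 0$ from $B_1 \xi_i = 0$, and $w^T \eta_i = z^T B_2 B_1 \eta_i = -z^T B_2^2 \xi_i = 0$ using $B_1 \eta_i = -B_2 \xi_i$. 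Lemma \ref{lemma: deg2-isotropic} gives $w = 0$.

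Next I would show $B_1 \eta_j = 0$ for all $j$ (and symmetrically $B_2 \xi_j = 0$). Set $y = B_1 \eta_j$; note that also $y = -B_2 \xi_j$. Clearly $B_1 y = B_1^2 \eta_j = 0$, and $B_2 y = B_2 B_1 \eta_j = 0$ by the previous step. Isotropy is $y^T y = \eta_j^T B_1^2 \eta_j = 0$. For the orthogonality conditions, $y^T \xi_i = \eta_j^T B_1 \xi_i = 0$, while $y^T \eta_i = \eta_j^T B_1 \eta_i = -\eta_j^T B_2 \xi_i = -(B_2 \eta_j)^T \xi_i = 0$ using symmetry of $B_2$ and $B_2 \eta_j = 0$. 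Lemma \ref{lemma: deg2-isotropic} applies and $y = 0$.

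With those two cancellations in hand, the conclusion is immediate. For any $z$, put $y = B_1 z$. Then $B_1 y = 0$, $B_2 y = B_2 B_1 z = 0$ by Step 1, $y$ is isotropic since $y^T y = z^T B_1^2 z = 0$, $y^T \xi_i = z^T B_1 \xi_i = 0$, and $y^T \eta_i = z^T B_1 \eta_i = 0$ by Step 2. One more application of Lemma \ref{lemma: deg2-isotropic} yields $B_1 z = 0$ for all $z$, hence $B_1 = 0$; the identical argument with the roles of $\xi$ and $\eta$ (and of $B_1$ and $B_2$) swapped gives $B_2 = 0$. The only mildly delicate point is the second step, where one must notice that the relation $B_1 \eta_i = -B_2 \xi_i$ combined with the symmetry of $B_2$ is precisely what makes $y^T \eta_i$ vanish; everything else is a direct bookkeeping of the quadratic relations.
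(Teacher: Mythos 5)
Your proof is correct and follows essentially the same strategy as the paper's: a bootstrap of applications of Lemma \ref{lemma: deg2-isotropic} to vectors that are automatically isotropic because $B_1^2=B_2^2=0$, differing only in that you establish $B_1B_2=B_2B_1=0$ before $B_1\eta_j=0$ whereas the paper does these two steps in the opposite order (both orderings work, since neither step depends on the other). All the individual verifications check out.
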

\begin{proof}
We first show $B_1\eta_i=0$ for all $i$. Note that for all $j$:
$$\xi_j^TB_1\eta_i=0,\quad \eta_j^TB_1\eta_i=-\eta_j^TB_2\xi_i=0,\quad B_1B_1\eta_i=0, \quad B_2B_1\eta_i=-B_1B_2\eta_i=0,$$
where we used equations (\ref{eq: deg2-hm}), (\ref{eq: deg2-family}) and symmetry of $B_1$. Since $B_1\eta_i$ is isotropic (recall $B_1^2=0$) we apply Lemma~\ref{lemma: deg2-isotropic} and find $B_1\eta_i=0$.

Next let $v\in\mtr^a$, then $B_2B_1v$ is isotropic. We get from equations (\ref{eq: deg2-hm}), (\ref{eq: deg2-family}):
$$\xi_i^TB_2B_1v=0,\qquad \eta_i^T B_2B_1v=0,\qquad B_1B_2B_1v=0,\qquad B_2B_2B_1v=0,$$
whence $B_2B_1v=0$ by Lemma~\ref{lemma: deg2-isotropic}.

Note that $B_1v$ is also istropic. We had just shown $B_2B_1v=0$. Again:
$$\xi_i^TB_1v=0,\qquad \eta_i^TB_1v=0,\qquad B_1B_1v=0.$$
Where this time we also needed $B_1\eta_i=0$. Lemma~\ref{lemma: deg2-isotropic} then implies $B_1v=0$, since $v$ was arbitrary one finds $B_1=0$. In the same way one shows $B_2v=0$ for all $v\in\mtr^a$.
%
%
%
\end{proof}

In what follows we let $H_1$ denote the subspace of $\mtr^a\otimes\mtc$ spanned by the $\xi_i$, let $H_2$ denote the subspace spanned by the $\eta_i$. We let $k\defeq\dim_\mtc(H_1)$, which is equal to $\dim_\mtc(H_2)$ as follows from the next lemma:

\begin{lemma}\label{lemma: subspace-iso}
$H_1, H_2$ are isotropic subspaces of $\mtr^a\otimes\mtc$ and the map
$$\varphi: H_1\to H_2,\qquad \sum_i \lambda_i \xi_i\mapsto \sum_i \lambda_i \eta_i$$
is a well defined isomorphism of complex vector spaces.
\end{lemma}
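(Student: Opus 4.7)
The plan is to verify three things: isotropy of $H_1$ and $H_2$, well-definedness of $\varphi$, and bijectivity. The isotropy of $H_i$ is immediate: for any element $\sum_i \lambda_i \xi_i \in H_1$ we have
$$\Bigl(\sum_i \lambda_i \xi_i\Bigr)^T\Bigl(\sum_j \mu_j \xi_j\Bigr)=\sum_{i,j}\lambda_i\mu_j\, \xi_i^T\xi_j=0$$
by the first identity in (\ref{eq: deg2-hm}), and similarly for $H_2$.

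The heart of the argument is the well-definedness of $\varphi$. I would assume that $\sum_i\lambda_i\xi_i=0$ and set $x\defeq\sum_i\lambda_i\eta_i$; the goal is to deduce $x=0$ via Lemma~\ref{lemma: deg2-isotropic} (which applies since $B_1=B_2=0$ by Lemma~\ref{lemma: deg2-nonholsquare}). The vector $x$ is isotropic because $H_2$ is isotropic, it satisfies $x^T\eta_j=0$ for the same reason, and both $B_1x$ and $B_2x$ vanish trivially. The one condition that actually uses the hypothesis $\sum_i\lambda_i\xi_i=0$ is the pairing with the $\xi_j$: using the symmetry relation $\xi_j^T\eta_i=-\xi_i^T\eta_j$ from (\ref{eq: deg2-family}) gives
$$x^T\xi_j=\sum_i\lambda_i\,\eta_i^T\xi_j=\sum_i\lambda_i\,\xi_j^T\eta_i=-\sum_i\lambda_i\,\xi_i^T\eta_j=-\Bigl(\sum_i\lambda_i\xi_i\Bigr)^T\eta_j=0.$$
Lemma~\ref{lemma: deg2-isotropic} then forces $x=0$, so $\varphi$ is well defined.

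Finally, I would note that the conditions (\ref{eq: deg2-hm}) and (\ref{eq: deg2-family}) are symmetric in the roles of the $\xi_i$ and $\eta_i$, so the same argument applied with the roles swapped shows that the map $\sum_i\lambda_i\eta_i\mapsto\sum_i\lambda_i\xi_i$ is likewise well defined. This map is manifestly a two-sided inverse of $\varphi$, giving the isomorphism and in particular $\dim_\mtc H_1=\dim_\mtc H_2=k$.

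The only non-routine step is the well-definedness; the key insight is that the symmetry in the cross-term relation $\xi_i^T\eta_j+\xi_j^T\eta_i=0$ lets one swap the linear combination from the $\eta$-side back to the $\xi$-side, which is exactly what is needed to exploit the assumed relation. Everything else is a bookkeeping application of the already-proven Lemmas~\ref{lemma: deg2-isotropic} and~\ref{lemma: deg2-nonholsquare}.
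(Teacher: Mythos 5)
Your proof is correct and follows essentially the same route as the paper: isotropy from $\xi_i^T\xi_j=0=\eta_i^T\eta_j$, well-definedness by pairing $\sum_i\lambda_i\eta_i$ against the $\xi_j$ via the antisymmetry relation $\xi_i^T\eta_j+\xi_j^T\eta_i=0$ and invoking Lemma~\ref{lemma: deg2-isotropic}, and the reverse implication by symmetry. Your explicit remark that $B_1=B_2=0$ (Lemma~\ref{lemma: deg2-nonholsquare}) is what makes the $B$-conditions of Lemma~\ref{lemma: deg2-isotropic} automatic is a small clarification the paper leaves implicit, but the argument is the same.
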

\begin{proof}
Isotropy follows immediately from $\xi_i^T\xi_j=0=\eta_i^T\eta_j$. In order to check that $\varphi$ is both well defined and an isomorphism it is enough to check that
$$\sum_i \lambda_i \xi_i =0 \iff \sum_i\lambda_i\eta_i=0.$$
Suppose that $\sum_i\lambda_i \xi_i=0$, then:
$$\xi_j^T\left(\sum_i\lambda_i\eta_i\right) = \sum_i\lambda_i \xi_j^T\eta_i = -\left(\sum_i\lambda_i\xi_i\right)^T\eta_j =0.$$
Additionally $\eta_j^T\left(\sum_i\lambda_i\eta_i\right)=0$ for all $j$ and so the isotropic vector $\sum_i\lambda_i\eta_i$ vanishes by Lemma~\ref{lemma: deg2-isotropic}.

The other implication follows by applying the same steps.
\end{proof}

\begin{lemma}
The map $\varphi$ has the property that for all $x,y\in H_1$:
\begin{equation}
x^T\varphi(y) = -\varphi(x)^Ty, \label{eq: phi-antisymm}
\end{equation}
\end{lemma}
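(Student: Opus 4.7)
The plan is to reduce the claim to a direct computation using the relations from equation~(\ref{eq: deg2-family}), which were one of the conditions for $\{F_1,F_2\}$ to form a $(0,0)$-eigenfamily.

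First I would expand arbitrary elements $x,y\in H_1$ in the spanning set $\{\xi_i\}$, writing $x=\sum_i\lambda_i\xi_i$ and $y=\sum_j\mu_j\xi_j$. By the definition of $\varphi$ one then has $\varphi(x)=\sum_i\lambda_i\eta_i$ and $\varphi(y)=\sum_j\mu_j\eta_j$, so both sides of \eqref{eq: phi-antisymm} expand into double sums over the quantities $\xi_i^T\eta_j$. Specifically,
\begin{equation*}
x^T\varphi(y)=\sum_{i,j}\lambda_i\mu_j\,\xi_i^T\eta_j,\qquad \varphi(x)^Ty=\sum_{i,j}\lambda_i\mu_j\,\eta_i^T\xi_j=\sum_{i,j}\lambda_i\mu_j\,\xi_j^T\eta_i,
\end{equation*}
where in the last equality I used that the bilinear form $v\mapsto v^Tw$ is symmetric.

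Next I would invoke the identity $\xi_i^T\eta_j+\xi_j^T\eta_i=0$ from equations~(\ref{eq: deg2-family}); this is precisely the symmetrised cross condition coming from $\kappa(F_1,F_2)=0$. Substituting $\xi_j^T\eta_i=-\xi_i^T\eta_j$ into the expression for $\varphi(x)^Ty$ immediately gives $\varphi(x)^Ty=-x^T\varphi(y)$, which is~\eqref{eq: phi-antisymm}.

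There is no genuine obstacle here; the only mild point worth checking is that the computation is independent of the chosen representation $x=\sum_i\lambda_i\xi_i$, but this is automatic since Lemma~\ref{lemma: subspace-iso} already established that $\varphi$ is well defined, and the left-hand side $x^T\varphi(y)$ depends only on $x$ and $\varphi(y)$. So the proof is essentially a one-line manipulation once the defining antisymmetry relation for the pair $(\xi_i,\eta_j)$ is brought in.
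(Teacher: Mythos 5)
Your proof is correct and is essentially the paper's own argument written out in full: the paper simply cites the relation $\xi_i^T\eta_j=-\xi_j^T\eta_i$ from equations~(\ref{eq: deg2-family}) and notes the lemma follows immediately, while you make the bilinear expansion explicit. The remark about well-definedness being inherited from Lemma~\ref{lemma: subspace-iso} is a sensible, if unnecessary, check.
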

\begin{proof}
This follows immediately from $\xi_i^T\eta_j = -\eta_j^T\xi_i$.
\end{proof}

Let $e_1,...,e_k$ denote an orthonormal basis of $H_1$ with respect to the standard Euclidean inner product $(x,y)\mapsto \overline x^T y$. Let $d_1,...,d_\delta\in\mtr^a$ so that
$$e_1,....,e_k,\overline e_1,...,\overline e_k, d_1,...,d_\delta$$
is an orthonormal basis of $\mtr^a\otimes\mtc$ with respect to the same scalar product, choosing such a basis is possible since $H_1$ is isotropic.

\begin{defn}
Let $X,Y$ denote the complex $k\times k$ matrices and $v$ the complex $k\times\delta$ matrix defined by:
$$\varphi(e_i) = \sum_j X_{ij}e_j+\sum_j Y_{ij}\overline e_j+\sum_\ell v_{i\ell}d_\ell.$$
If $\delta=0$ we use the convention $v=0$.
\end{defn}

\begin{lemma}\label{lemma: Y}
$Y$ is anti-symmetric and invertible.
\end{lemma}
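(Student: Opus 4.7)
The plan is to read off the matrix entry $Y_{ij}$ as a concrete bilinear pairing and then exploit the antisymmetry relation \eqref{eq: phi-antisymm} together with Lemmas~\ref{lemma: deg2-nonholsquare} and~\ref{lemma: deg2-isotropic}.

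First I would compute $\varphi(e_i)^T e_j$. Since $H_1$ is isotropic we have $e_l^T e_j = 0$, since $d_l \in \mathbb{R}^a$ we have $d_l^T e_j = (d_l, e_j) = 0$, and by orthonormality of the chosen basis with respect to the Hermitian inner product we get $\bar{e}_l^T e_j = (e_l, e_j) = \delta_{lj}$. Plugging the definition of $\varphi(e_i)$ into these pairings yields
\[
\varphi(e_i)^T e_j \;=\; Y_{ij}.
\]
The antisymmetry relation \eqref{eq: phi-antisymm} applied to $x = e_i$, $y = e_j$ then gives $Y_{ji} = e_j^T\varphi(e_i) = -\varphi(e_j)^T e_i = -Y_{ij}$, so $Y$ is antisymmetric.

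For invertibility I would argue by contradiction: assume there exists $\lambda \in \mathbb{C}^k \setminus\{0\}$ with $Y^T\lambda = 0$, and set $w = \sum_i \lambda_i e_i \in H_1$. Then $w$ is isotropic (as an element of the isotropic subspace $H_1$), and $w^T \xi_i = 0$ for all $i$ by isotropy of $H_1$. The assumption $Y^T\lambda = 0$ rewrites as $\varphi(w)^T e_j = 0$ for every $j$, which by \eqref{eq: phi-antisymm} gives $w^T \varphi(e_j) = 0$, i.e. $w^T \eta_j = 0$ for all $j$. Finally, Lemma~\ref{lemma: deg2-nonholsquare} tells us that $B_1 = B_2 = 0$, so trivially $B_1 w = B_2 w = 0$. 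Lemma~\ref{lemma: deg2-isotropic} now forces $w = 0$, contradicting the linear independence of $e_1,\dots,e_k$. Hence $Y^T$, and therefore $Y$, is invertible.

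No step is really a major obstacle: the key observation is simply that $Y$ encodes the bilinear pairing of $\varphi(e_i)$ against $e_j$ in the isotropic subspace $H_1$, so that antisymmetry is a direct consequence of \eqref{eq: phi-antisymm}, while a non-trivial kernel of $Y^T$ would produce a forbidden isotropic vector in the sense of Lemma~\ref{lemma: deg2-isotropic}. The only subtlety to keep track of is distinguishing the bilinear form $x^T y$ (which vanishes on $H_1$) from the Hermitian form $\bar{x}^T y$ (which is the one giving the orthonormal basis) when reading off the coefficients of $\varphi(e_i)$.
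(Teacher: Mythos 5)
Your proof is correct and follows essentially the same route as the paper: antisymmetry falls out of (\ref{eq: phi-antisymm}), and a kernel vector of $Y$ would produce an isotropic vector annihilating both $H_1$ and $H_2$, which Lemma~\ref{lemma: deg2-isotropic} forbids. The only (harmless) difference is that you apply that lemma to $w=\sum_i\lambda_ie_i\in H_1$ directly, whereas the paper applies it to $\varphi(w)\in H_2$ and then invokes injectivity of $\varphi$ from Lemma~\ref{lemma: subspace-iso}.
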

\begin{proof}
By (\ref{eq: phi-antisymm}) one has
$$e_i^T\varphi(e_j)=Y_{ij}=-e_j^T\varphi(e_i)=-Y_{ji}$$
and so $Y$ is anti-symmetric.

Suppose that $Y$ were not invertible and let $\lambda_1,...,\lambda_k\in\mtc$ be so that $\sum_j Y_{ij}\lambda_j=0$ for all $i$. Then $$e_i^T\varphi(\sum_j \lambda_j e_j)=0$$
for all $i$ and $\varphi(\sum_j\lambda_je_j)$ annihilates $H_1$, in particular all of the $\xi_i$. Since it is an element of $H_2$ and so annihilates all of the $\eta_i$. By Lemma~\ref{lemma: deg2-isotropic} it vanishes and then $\lambda_j=0$ for all $j$.

It follows that $Y$ is injective, since it is a square matrix it is invertible.
\end{proof}

\begin{remark}
It follows that $k=\dim_\mtc(H_1)$ is even.
\end{remark}

\begin{lemma}
$\delta\in\{0,1\}$. If $\delta=1$ then $v$ is a non-zero element of $\mtc^k$.
\end{lemma}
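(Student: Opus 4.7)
The plan is to invoke Lemma~\ref{lemma: deg2-isotropic} applied to isotropic vectors of the form $w = \sum_i a_i e_i + \sum_\ell \mu_\ell d_\ell \in \mtr^a\otimes\mtc$. Because $Y$ is invertible by Lemma~\ref{lemma: Y}, for any prescribed $\mu\in\mtc^\delta$ the coefficient $a\in\mtc^k$ can be uniquely tuned so as to annihilate $H_2$; this is the only leverage one needs.

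To show $\delta\leq 1$, I would take $\mu\in\mtc^\delta$ arbitrary, set $a \defeq -Y^{-1}v\mu$ (viewing $v$ as a $k\times\delta$ matrix), and carry out a short bookkeeping using the bilinear pairings $e_i^T\bar e_j=\delta_{ij}$, $e_i^T e_j=0$, $e_i^T d_\ell=0$ and $d_\ell^T d_m=\delta_{\ell m}$. This yields $w^T w=\sum_\ell\mu_\ell^2$, $w^T\xi_j=0$ automatically, and $w^T\varphi(e_i)=(Ya+v\mu)_i=0$ by construction (and hence $w^T\eta_j=0$ since $H_2=\varphi(H_1)$). Combined with $B_1=B_2=0$ from Lemma~\ref{lemma: deg2-nonholsquare}, whenever $\mu$ is isotropic all hypotheses of Lemma~\ref{lemma: deg2-isotropic} hold, forcing $w=0$ and hence $\mu=0$. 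But $\mu=(1,i,0,\ldots,0)$ is a non-zero isotropic vector whenever $\delta\geq 2$, so we must have $\delta\in\{0,1\}$.

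For the second assertion, the preceding argument becomes vacuous in the $\delta=1$ case since the only isotropic element of $\mtc$ is zero. The plan is instead to invoke the fullness of $\{F_1,F_2\}$, which has not yet played any role in this subsection. If $v=0$, then $\varphi(e_i)\in H_1\oplus\bar H_1$ for every $i$, so both $\xi_i$ and $\eta_i$ lie in the complexification of the real subspace $W\defeq\spn_\mtr\{\mathrm{Re}(e_i),\mathrm{Im}(e_i)\}\subset\mtr^a$. A short Gram matrix computation (using $\bar e_i^T e_j=\delta_{ij}$ and $e_i^T e_j=0$) gives $\dim_\mtr W = 2k$. Since $F_1$ and $F_2$ are affine in $x$ with coefficients in the complexification of $W$, both maps factor through $\mtc^n\oplus W$, a proper subspace of $\mtr^m$ as $2k<a=2k+1$; this contradicts fullness.

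The main obstacle is spotting the need for this second step: once $\delta=1$ the isotropy-based argument is toothless, and one has to realise that the fullness hypothesis, silent until now, is precisely the ingredient that rules out the residual loophole $v=0$.
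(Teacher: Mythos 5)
Your proof is correct and follows essentially the same route as the paper: for $\delta\geq 2$ the paper exhibits exactly your vector $w$ (with $a=-Y^{-1}v\mu$, equivalently $v^TY^{-1}$ by antisymmetry of $Y^{-1}$) for $\mu=(1,i,0,\dots,0)$ and kills it with Lemma~\ref{lemma: deg2-isotropic}, and for $\delta=1$, $v=0$ it likewise invokes fullness, observing that $d_1$ is then a real vector annihilating all gradients --- which is precisely your factorisation through $\mtc^n\oplus W=d_1^\perp$.
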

\begin{proof}
Note that for all $j,\ell$ one has:
$$\varphi(e_j)^T\left(d_\ell+\sum_i(v^TY^{-1})_{\ell i}e_i\right)= v_{j\ell}+ \sum_i (v^TY^{-1})_{\ell i}Y_{ji}= v_{j\ell}+(v^TY^{-1}Y^T)_{\ell j}=v_{j\ell}-v_{j\ell}=0.$$
If $\delta\geq2$ one then checks that:
$$\left(d_1+\sum_i (vY^{-1})_{1i}e_i\right)+i\left(d_2+\sum_i (vY^{-1})_{2i}e_i\right)$$
is isotropic and annihilates both $H_1$ and $H_2$. Lemma~\ref{lemma: deg2-isotropic} then implies that it is identically zero, which contradicts $d_1, d_2$ being linearly independent. We conclude $\delta\leq1$. 

In the case that $\delta=1$ we identify the $k\times1$ matrix $v$ with an element of $\mtc^k$. The case $\delta=1, v=0$ is not possible since then $d_1$ is a real vector annihilating both $H_1$ and $H_2$, contradicting fullness of $\{F_1,F_2\}$.
\end{proof}

\begin{remark}
From the previous results it follows that the numbers $(n,k,\delta)$ defined above satisfy the conditions of being a subspace type. The basis $e_1,...,e_k,\overline{e_1},...,\overline{e_k},d_\delta$ yields an isomorphism of real vector spaces:
$$\mtc^k\oplus\mtr^\delta\to\mtr^a,\qquad ((w_1,...,w_k),t)\mapsto \sum_i \overline{w_i} e_i + \sum_i w_i\overline{e_i}+td_\delta.$$
\end{remark}

Denote with $A$ the $n\times k$ matrix defined by the equations
$$\xi_i=\sum_j A_{ij}e_j $$
for all $i$. Note that $A$ has rank $k$ since the $\xi_i$ span the $k$-dimensional space $H_1$.

\begin{remark}$(P_1, P_2, A)$ satisfy the conditions of being polynomial data for $(n,k,\delta)$.  Additionally the steps so far show:
$$F_1((z_1,...,z_n), (w_1,...,w_k),t)=P_1(z)+\sum_{ij} z_i A_{ij} w_j.$$
\end{remark}

\begin{lemma}\label{lemma: C-v}
There is an anti-symmetric complex $k\times k$ matrix $C$ so that $X= (\frac12 vv^T+C)Y^{-1}$.
\end{lemma}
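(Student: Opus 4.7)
The plan is to extract the desired identity from the single fact that $H_2$ is isotropic (Lemma~\ref{lemma: subspace-iso}), which forces $\varphi(e_i)^T\varphi(e_j)=0$ for all $i,j$. Expanding this in the chosen basis $e_1,\ldots,e_k,\overline{e}_1,\ldots,\overline{e}_k,d_1,\ldots,d_\delta$ should produce one matrix equation relating $X$, $Y$, and $v$, and solving it for $X$ will deliver the desired $C$ essentially for free.

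First I would tabulate the values of the complex-bilinear product $x^Ty$ on pairs of basis vectors, which is \emph{not} the Hermitian product with respect to which the basis was declared orthonormal. Isotropy of $H_1$ gives $e_i^Te_j=0$ and, by conjugation, $\overline{e}_i^T\overline{e}_j=0$; orthonormality in the Hermitian product translates to $e_i^T\overline{e}_j=\delta_{ij}$ and $d_\ell^Td_m=\delta_{\ell m}$; all remaining mixed pairings vanish since the $d_\ell$ are real and orthogonal to both $H_1$ and $\overline{H_1}$.

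With these pairings in hand, inserting $\varphi(e_i)=\sum_l X_{il}e_l+\sum_l Y_{il}\overline{e}_l+\sum_m v_{im}d_m$ into $\varphi(e_i)^T\varphi(e_j)=0$ kills most cross-terms and leaves the single matrix identity
\begin{equation*}
XY^T+YX^T+vv^T=0.
\end{equation*}
Using $Y^T=-Y$ from Lemma~\ref{lemma: Y}, this becomes $YX^T=XY-vv^T$. Setting $C\defeq XY-\tfrac12 vv^T$, one computes
\begin{equation*}
C^T=Y^TX^T-\tfrac12 vv^T=-YX^T-\tfrac12 vv^T=-XY+\tfrac12 vv^T=-C,
\end{equation*}
so $C$ is anti-symmetric. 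Right-multiplying $XY=C+\tfrac12 vv^T$ by $Y^{-1}$ (which exists by Lemma~\ref{lemma: Y}) then yields the claimed formula $X=(\tfrac12 vv^T+C)Y^{-1}$.

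There is no real obstacle here: the point is to realise that isotropy of $H_2$ is the right input, since the antisymmetry relation~(\ref{eq: phi-antisymm}) was already spent producing $Y^T=-Y$. Once that is noticed, the rest is bookkeeping; the only delicate step is distinguishing Hermitian orthonormality from complex-bilinear pairings when reading off which mixed products of $e_i$, $\overline{e}_j$, $d_\ell$ vanish.
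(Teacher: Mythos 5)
Your proof is correct and follows the same route as the paper: expand $\varphi(e_i)^T\varphi(e_j)=0$ (isotropy of $H_2$) in the chosen basis to get $XY^T+(XY^T)^T+vv^T=0$, then apply $Y^T=-Y$ and set $C=XY-\tfrac12 vv^T$. The paper states the expansion more tersely, but the bookkeeping of the bilinear pairings and the final algebra are identical to yours.
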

\begin{proof}
Note that:
$$\varphi(e_i)^T\varphi(e_j) = (XY^T)_{ij}+(XY^T)_{ji}+(vv^T)_{ij}.$$
Recall that $H_2$ is an isotropic vector space, so $\varphi(e_i)^T\varphi(e_j)=0$ for all $i,j$. Together with $Y^T=-Y$ this equation then becomes
$$(XY)+(XY)^T=vv^T,$$
which implies the lemma.
\end{proof}
\begin{remark}
Lemmas \ref{lemma: Y} and \ref{lemma: C-v} then show that the triple $(Y,C,v)$ satisfies the conditions of being twisting data for $(n,k,\delta)$.
\end{remark}
Since $\eta_i=\varphi(\xi_i) = \sum_{j}A_{ij}\varphi(e_j)$ one then gets:
$$F_2((z_1,...,z_n),(w_1,...,w_k),t) = P_2(z)+\sum_{ij}z_i A_{ij} (\sum_l X_{jl} w_l + \sum_l Y_{jl}\overline{w_l} + v_j t)$$
This finishes the characterisation of full eigenfamilies $\{F_1,F_2\}$ of degree $2$ homogenous polynomials.

It remains to see that any choice of subspace type and polynomial as well as twisting data determines such an eigenfamily. But this is an elementary calculation that we skip.\qed

\section{Constructions of polynomial eigenfamilies}\label{sec: constructions}

This section details some ways in which eigenfamilies of homogeneous polynomial harmonic morphims from $\mtr^m$ to $\mtc$ may be constructed.

\subsection{Eigenfamilies induced by polynomial harmonic morphisms $\mtr^m\to\mtr^n$}\label{subsec: from-higher}

Harmonic morphisms from $\mtr^m$ to $\mtr^n$ that are polynomial in each component have been studied before. While Eells and Yiu \cite{eells-yiu-95} find all such polynomials that restrict to a map of spheres $S^{m-1}\to S^{n-1}$, and all homogeneous polynomial harmonic morphisms $\mtr^m\to\mtr^n$ of degree $2$ are classified by the works of Ou and Wood \cite{ou-97, ou-wood-96}, not so much is known about higher degree polynomials. Ababou, Baird, and Brossard \cite{ABB-99} derive restrictions on the dimensions of domain and codomain of a homogeneous polynomial harmonic morphism, and show that the symbol at a critical point of a weakly horizontal map between two Riemannian manifolds $M^m\to N^n$ gives a homogeneous polynomial harmonic morphism $\mtr^{m}\to\mtr^{n}$. See chapter 5 of \cite{baird-wood-book} for a unified treatment.

One checks the following elementary property of harmonic morphisms with codomain $\mtr^n$, see e.g. \cite{baird-wood-book}:
\begin{lemma}
A map $P:\mtr^m\to\mtr^n$ is a harmonic morphism if and only if for all $j,k\in\{1,...,n\}$, $j\neq k$ the map $P_j+iP_k:\mtr^m\to\mtc$ is a harmonic morphism.
\end{lemma}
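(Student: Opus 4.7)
The plan is to translate the harmonic morphism conditions for $P$ and for each $P_j + iP_k$ into componentwise gradient identities, and observe that the two sets of identities are equivalent.

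First I would recall the explicit characterisations. For the codomain $\mtr^n$, being a harmonic morphism means being harmonic (i.e.\ $\Delta P_j = 0$ for every component $j$) and weakly horizontally conformal, which — writing the pull-back of the Euclidean metric — is equivalent to the existence of a function $\lambda:\mtr^m\to\mtr$ with
\[
(\snab{x} P_j)^T\snab{x} P_k = \lambda(x)^2\,\delta_{jk} \qquad \text{for all } j,k\in\{1,\dots,n\}.
\]
For the codomain $\mtc$, by (\ref{eq: hm1}) and (\ref{eq: hm2}), $f:\mtr^m\to\mtc$ is a harmonic morphism iff $\Delta f = 0$ and $(\snab{x}f)^T\snab{x}f=0$ for all $x$. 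Applied to $f = P_j + iP_k$ and separating real and imaginary parts, these two equations respectively become
\[
\Delta P_j = 0 = \Delta P_k, \qquad \|\snab{x}P_j\|^2 = \|\snab{x}P_k\|^2, \qquad (\snab{x}P_j)^T\snab{x}P_k = 0.
\]

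Next I would prove both implications from this reformulation. The forward direction is immediate: if $P$ is a harmonic morphism, then each $P_j$ is harmonic and the WHC identities give $\|\snab{x}P_j\|^2 = \lambda(x)^2 = \|\snab{x}P_k\|^2$ as well as $(\snab{x}P_j)^T\snab{x}P_k = 0$, so $P_j + iP_k$ satisfies the two equations above. Conversely, assuming every $P_j + iP_k$ with $j\neq k$ is a harmonic morphism, picking any pair shows $\Delta P_j = 0$ for each $j$; then setting $\lambda(x)^2 \defeq \|\snab{x}P_1\|^2$, the two pointwise identities say $\|\snab{x}P_j\|^2 = \lambda(x)^2$ for every $j$ (via the pair $\{1,j\}$) and $(\snab{x}P_j)^T\snab{x}P_k = 0$ for $j\neq k$, which is exactly the WHC condition. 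Combined with harmonicity this gives that $P$ is a harmonic morphism.

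There is no real obstacle here; this is a direct unpacking of definitions, with the only subtlety being that one tacitly assumes $n\geq 2$ so that at least one pair $(j,k)$ with $j\neq k$ exists (otherwise the right-hand side of the equivalence is vacuous while the left-hand side is just harmonicity of $P$). Under this mild convention the lemma follows.
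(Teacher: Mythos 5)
Your proof is correct: the paper itself omits the argument (citing Baird--Wood), and your direct unpacking — weak horizontal conformality as $(\snab{x}P_j)^T\snab{x}P_k=\lambda(x)^2\delta_{jk}$ versus the real and imaginary parts of $g_\mtc(\nabla f,\nabla f)=0$ for $f=P_j+iP_k$ — is exactly the standard verification intended here. Your remark that $n\geq2$ is tacitly assumed (otherwise the right-hand side is vacuous) is a fair and accurate caveat.
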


It then immediately follows:
\begin{proposition}
If $P:\mtr^m\to\mtr^n$ is a harmonic morphism then the family of functions
$$\mathcal E(P)\defeq \left\{ P_{2k-1}+i P_{2k}\ \middle|\  k\in\{1,...,\lfloor\frac n2\rfloor\} \right\}$$
is a $(0,0)$-eigenfamily.
\end{proposition}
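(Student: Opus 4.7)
The plan is to unpack the definition of a $(0,0)$-eigenfamily and verify both conditions using tools from the excerpt. We need to show that for every pair $\varphi_k\defeq P_{2k-1}+iP_{2k}$ and $\varphi_l\defeq P_{2l-1}+iP_{2l}$ in $\mathcal E(P)$, one has $\Delta\varphi_k=0$ and $\kappa(\varphi_k,\varphi_l)=0$.

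First I would dispatch the diagonal case $k=l$. By the lemma immediately preceding the proposition, the fact that $P$ is a harmonic morphism to $\mtr^n$ implies that $P_{2k-1}+iP_{2k}$ is itself a harmonic morphism from $\mtr^m$ to $\mtc$. By equations (\ref{eq: hm1}) and (\ref{eq: hm2}) this is equivalent to $\Delta\varphi_k=0$ and $\kappa(\varphi_k,\varphi_k)=0$, giving both conditions simultaneously and in particular the harmonicity part of the eigenfamily condition for every element.

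For the off-diagonal case $k\neq l$, I would expand
\[
\kappa(\varphi_k,\varphi_l)=\nabla P_{2k-1}\!\cdot\!\nabla P_{2l-1}-\nabla P_{2k}\!\cdot\!\nabla P_{2l}+i\bigl(\nabla P_{2k-1}\!\cdot\!\nabla P_{2l}+\nabla P_{2k}\!\cdot\!\nabla P_{2l-1}\bigr).
\]
Now the key ingredient is that $P:\mtr^m\to\mtr^n$ being weakly horizontally conformal means, in coordinates, that $JJ^T=\lambda^2 I_n$ where $J$ is the Jacobian; equivalently $\nabla P_i\cdot\nabla P_j=\lambda^2\delta_{ij}$. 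Since $k\neq l$, the four index pairs $(2k-1,2l-1)$, $(2k,2l)$, $(2k-1,2l)$, $(2k,2l-1)$ all consist of distinct indices, so each of the four real dot products above vanishes identically, and therefore $\kappa(\varphi_k,\varphi_l)=0$.

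There is no real obstacle here: the proof is essentially bookkeeping, with the only subtle point being to remember that horizontal weak conformality of $P$ is precisely the orthogonality-plus-equal-norms relation $\nabla P_i\cdot\nabla P_j=\lambda^2\delta_{ij}$, which is exactly what decouples the cross-terms between different pairs of components. Alternatively, one could avoid invoking the conformality factor explicitly and derive the vanishing of the mixed gradient products by applying the preceding lemma to each pair of indices from $\{2k-1,2k,2l-1,2l\}$ and reading off the real and imaginary parts of $\kappa(P_j+iP_{j'},P_j+iP_{j'})=0$, but the horizontal conformality formulation is more direct.
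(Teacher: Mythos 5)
Your proof is correct and follows the same route the paper intends: the paper simply states that the proposition ``immediately follows'' from the preceding lemma, and your argument is exactly the bookkeeping behind that claim --- the lemma (equivalently, weak horizontal conformality $\nabla P_i\cdot\nabla P_j=\lambda^2\delta_{ij}$) kills the diagonal terms via $\kappa(\varphi_k,\varphi_k)=0$ and the off-diagonal terms because all four index pairs are distinct. Nothing is missing.
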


In particular if $P:\mtr^m\to\mtr^n$ is a homogeneous polynomial harmonic morphism then $\mathcal E(P)$ is an eigenfamily of homogeneous polynomials.

\begin{example}
Let $P:\mth^3\to\mth, (q_1,q_2,q_3)\mapsto q_1\cdot q_2\cdot q_3$ be the multiplication of three quaternions. This is a harmonic morphism and upon identifying $\mth^3\cong \mtc^6$, $\mathcal E(P)$ is a degree $3$ eigenfamily consisting of the following two maps:
\begin{align*}
\mtc^6\to\mtc,\quad (z_1,z_2,u_1,u_2,w_1,w_2)\mapsto z_1(u_1w_1-u_2\overline{w_2})-z_2(\overline{u_1w_2}+\overline{u_2}w_1),\\
\mtc^6\to\mtc,\quad (z_1,z_2,u_1,u_2,w_1,w_2)\mapsto z_1(u_1w_2+u_2\overline{w_1})+z_2(\overline{u_1w_1}-\overline{u_2}w_2).
\end{align*}
\end{example}

Note that if $B\in O(n)$ and $P:\mtr^m\to\mtr^n$ is a harmonic morphism then $B\circ P$ is also a harmonic morphism. It may be interesting to investigate when $\mathcal E(P)$ and $\mathcal E(B\circ P)$ give different eigenfamilies. For convenience we define:
\begin{defn}
Let $\mathcal F_1, \mathcal F_2$ be two eigenfamilies of maps $\mtr^m\to\mtc$, we say that $\mathcal F_1,\mathcal F_2$ are congruent if there is an isometry $\Phi:\mtr^m\to\mtr^m$ so that
$$\spn_\mtc\mathcal F_1 = \spn_\mtc\Phi^*(\mathcal F_2),$$
where $\Phi^*(\mathcal F_2)\defeq\{F\circ \Phi \mid F\in\mathcal F_2\}$.
\end{defn}

Using the classification of homogeneous degree $2$ polynomial harmonic morphisms $\mtr^m\to\mtr^n$ one can make the following observations:

\begin{theorem}\label{thm: deg2-higher}
Let $P:\mtr^m\to\mtr^n$ be a non-zero homogeneous degree $2$ polynomial harmonic morphism.
\begin{enumerate}
\item If $n\geq4$ then $\mathcal E(P)$ is not uniformly of complex type.
\item For any $B\in O(n)$ the families $\mathcal E(P)$ and $\mathcal E(B\circ P)$ are congruent.
\end{enumerate}
\end{theorem}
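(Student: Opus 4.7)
The plan is to apply the Ou–Wood classification of non-zero homogeneous degree $2$ polynomial harmonic morphisms $P:\mtr^m\to\mtr^n$ (see \cite{ou-97,ou-wood-96}): after rescaling, $P_i(x)=x^T M_i x$ for real symmetric $m\times m$ matrices $M_1,\dots,M_n$ satisfying the Clifford relations $M_iM_j+M_jM_i=2\delta_{ij}I$. In particular $m$ is even and each $M_i$ is orthogonal (since $M_i^T M_i=M_i^2=I$). Both parts then reduce to computations with this Clifford structure.

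For part~(1), let $N_k:=M_{2k-1}+iM_{2k}$, so $\mathcal E(P)$ consists of the maps $x\mapsto x^T N_k x$, with gradients $2N_k x$. By Theorem~\ref{thm: complex-type}, uniform complex type is equivalent to $N_k N_l=0$ for all $k,l$. Introduce the complex structures $J_k:=M_{2k-1}M_{2k}$ (antisymmetric with $J_k^2=-I$) and let $W_k^{\pm}$ denote their $\pm i$-eigenspaces in $\mtr^m\otimes\mtc$. The factorisation $N_k=M_{2k-1}(I+iJ_k)$ together with invertibility of $M_{2k-1}$ gives $\ker N_k=\mathrm{Im}\,N_k=W_k^+$, a maximal isotropic subspace of complex dimension $m/2$. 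Assuming $n\geq 4$ and $N_1 N_2=0$, one obtains $W_2^+\subseteq W_1^+$, forcing $W_1^+=W_2^+$ and hence $W_1^-=W_2^-$ by complex conjugation. But the Clifford relations yield $M_3 J_1=J_1 M_3$ (the two anticommutations with $M_1$ and $M_2$ cancel), so $M_3$ preserves $W_1^+$, whereas $M_3 J_2=-J_2 M_3$, so $M_3(W_2^+)=W_2^-$. Combining, $W_1^-\subseteq W_1^+$, contradicting $W_1^+\cap W_1^-=\{0\}$.

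For part~(2), the morphism $B\circ P$ corresponds to the shifted Clifford system $\tilde M_i:=\sum_j B_{ij}M_j$. It suffices to construct $U\in O(m)$ with $UM_iU^{-1}=\pm\tilde M_i$: then $UN_kU^{-1}=\pm\tilde N_k$, and the isometry $\Phi(x):=Ux$ sends $x^T\tilde N_k x$ to $\pm\,x^T N_k x$, which preserves $\mathrm{span}_\mtc$. By the Cartan–Dieudonn\'e theorem, write $B=\sigma_{v_1}\cdots\sigma_{v_r}$ as a product of hyperplane reflections along unit vectors $v_j\in\mtr^n$, set $M_v:=\sum_i v_i M_i$ for unit $v$, and let $U:=M_{v_1}\cdots M_{v_r}$. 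A direct Clifford computation gives $M_v^T M_v=\|v\|^2 I=I$, so $U\in O(m)$, together with the identity $M_v M_i M_v=-M_i+2v_i M_v$, showing that $M_v$ implements $-\sigma_v$ by conjugation on the generators. Composing $r$ such conjugations yields $UM_iU^{-1}=(-1)^r\tilde M_i=\det(B)\,\tilde M_i$, which is all that is needed.

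The main obstacle is the sign bookkeeping in part~(2); this is handled cleanly by the fact that congruence of eigenfamilies is defined via equality of complex spans, which absorbs the overall factor $\det(B)=\pm 1$. Once the Clifford algebra framework is in hand, all remaining steps amount to routine manipulations with the anticommutation relations.
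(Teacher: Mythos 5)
Your argument is essentially correct, but it rests on one imprecise reduction at the outset that needs repairing. The Ou--Wood classification does not say that a non-zero homogeneous degree $2$ polynomial harmonic morphism becomes a single Clifford system after one rescaling: writing $P_i(x)=x^TM_ix$, the harmonic morphism condition gives $M_iM_j+M_jM_i=0$ for $i\neq j$ together with $M_1^2=\dots=M_n^2=:S$ for a positive semidefinite $S$ which may have several distinct eigenvalues, including $0$. The space $\mtr^m$ splits into eigenspaces of $S$, each preserved by every $M_i$, and only on an eigenspace with eigenvalue $\mu>0$ do the matrices $M_i/\sqrt{\mu}$ satisfy your relations $M_iM_j+M_jM_i=2\delta_{ij}I$; this is why the paper speaks of \emph{weighted} direct sums of irreducible Clifford systems. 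Both halves of your proof survive this correction, but you should say so explicitly: for part (1) it suffices to exhibit $N_1N_2\neq0$ on a single non-zero block (one exists since $P\neq0$), where your computation with $J_k=M_{2k-1}M_{2k}$ and the maximal isotropic eigenspaces $W_k^{\pm}$ is correct and complete; for part (2) the element $U=M_{v_1}\cdots M_{v_r}$ must be built block by block, normalising each $M_{v_j}$ by $\mu^{-1/2}$ on the $\mu$-eigenspace and taking the identity on $\ker S$, after which $U\in O(m)$ and the conjugation identity $UM_iU^{-1}=\det(B)\,\tilde M_i$ (up to the harmless $B$ versus $B^T$ bookkeeping) goes through on all of $\mtr^m$.

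Granting this, your route is genuinely different from the paper's and in one respect cleaner. For part (1) the paper projects to $n=4$, reduces to a single irreducible summand, and identifies that summand with quaternion multiplication, i.e.\ with Example~\ref{ex: complex-type}~(i); your direct argument via $\ker N_k=\operatorname{Im}N_k=W_k^+$ and the (anti)commutation of $M_3$ with $J_1$ and $J_2$ avoids any appeal to the irreducible classification or to an explicit model. For part (2) the paper invokes algebraic equivalence of irreducible symmetric Clifford representations and must treat $n\equiv1\ (\mathrm{mod}\ 4)$ separately by projecting away the last coordinate; your explicit Pin-group conjugation realises $\det(B)\,B$ on the generators uniformly for every $B\in O(n)$, and the residual sign $\det(B)=\pm1$ is indeed absorbed by the complex span in the definition of congruence, exactly as you observe. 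What the paper's approach buys is that it never needs the global Clifford relations and so the weighted-sum issue never arises; what yours buys is an explicit isometry and no case distinction on $n$.
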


\begin{proof}
We show point 2 first. From the classification of such maps via weighted direct sums of irreducible Clifford systems, one sees that if $n\not\equiv 1\,\text{mod}\,4$, that there is an isometry $A\in O(m)$ so that $B\circ P=P\circ A$, implying that $\mathcal E(P)$ and $\mathcal E(B\circ P)$ are congruent. This follows since any two irreducible representations of the Clifford algebra $C_n$ by symmetric matrices are \textit{algebraically equivalent} in this case, see e.g. Chapter 5 \cite{baird-wood-book}.

For the case $n\equiv1\,\text{mod}\, 4$ let $\pi:\mtr^n\to\mtr^{n-1}$ denote the projection to the first $n-1$ components. Then, since $n$ is odd, $\mathcal E(P)=\mathcal E(\pi\circ P)$ and $\mathcal E(B\circ P)=\mathcal E(\pi\circ B\circ P)$. Decomposing $P$ and $B\circ P$ as a weighted sum of irreducible Clifford systems. One notes that the weights appearing in both decompositions must be the same (counted with multiplicity), although the irreducible representation associated to any weight may be different. It is elementary to check that the weights appearing in the Clifford decompositions of $\pi\circ P$ and $\pi\circ B\circ P$ will be the same as those appearing in $P$ and $B\circ P$, using again that any two irreducible Clifford systems on $\mtr^{n-1}$ are algebraically equivalent one sees that $\mathcal E(\pi\circ P)$ and $\mathcal E(\pi\circ B\circ P)$ are congruent.

For the first point let $\pi_4:\mtr^n\to\mtr^4$ denote the projection to the first $4$ components, then $\pi_4\circ P:\mtr^m\to\mtr^4$ is a harmonic morphism. Since $\mathcal E(\pi_4\circ P)$ is a subfamily of $\mathcal E(P)$, it suffices to check that the statement in the case $n=4$. As before $P$ is a weighted direct sum of irreducible Clifford systems, so it suffices to check the statement for one such summand. In this case this amounts to checking that the map $\mathds H\oplus\mathds H\to\mathds H$ given by multiplication of two quaternions does not induce a family of complex type. Since the resulting family is in fact congruent to the one spanned by Example~\ref{ex: complex-type} (i), the remaining step follows.
\end{proof}
This observation for degree $2$ leads to the natural questions:

\begin{question}Let $P:\mtr^m\to\mtr^n$ be a homogeneous polynomial harmonic morphism of degree $\deg(P)\geq2$, and suppose $n\geq4$.
\begin{enumerate}
\item Is $\mathcal E(P)$ not of uniformly complex type?
\item Are the families $\mathcal E(P)$ and $\mathcal E(B\circ P)$ congruent for all $B\in O(n)$?
\end{enumerate}
\end{question}

\subsection{Complex defects}
\begin{defn}
Let $F:\mtr^m\to\mtc$ and let $y\in\mtr^m$, the \textit{complex defect of $F$ at $y$} is the map
$$[F]_y:\mtr^m\to\mtc,\qquad x\mapsto (\snab{x} F)\snab{y} F.$$
\end{defn}
\begin{remark}
$[F]_y=0$ for all $y\in\mtr^m$ if and only if $F$ is of complex type.
\end{remark}

The motivation for considering this definition is the following statement, which follows e.g. from the proof of Theorem 2.1 in \cite{ABB-99}:

\begin{proposition}
Suppose $F:\mtr^m\to\mtc$ is a homogeneous polynomial harmonic morphism of degree $3$. Then for all $y\in\mtr^m$ the defects $[F]_y$ are homogeneous polynomial harmonic morphisms of degree $2$.
\end{proposition}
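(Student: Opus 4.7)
My plan is to reduce the claim to a single matrix identity about the Hessian of $F$ and verify that identity by differentiating horizontal conformality twice. First I would set up notation: writing $F(x)=T_{ijk}x_ix_jx_k$ with $T$ a totally symmetric $3$-tensor, the Hessian $H(x)\defeq\nabla^2 F|_x$ is linear in $x$ and satisfies both the symmetry $H(a)b=H(b)a$ and Euler's identity $H(x)x=2\nabla_xF$. A direct computation of $\partial_i\partial_j[F]_y$ shows that the Hessian of $[F]_y$ with respect to $x$ is the constant matrix $H(\nabla_yF)$, so $[F]_y(x)=\tfrac12\,x^T H(\nabla_yF)\,x$. By the remark after Theorem~\ref{thm: complex-type}, a homogeneous quadratic form $x\mapsto\tfrac12 x^TAx$ is a harmonic morphism iff $A^2=0$, so the task reduces to showing $H(\nabla_yF)^2=0$.

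The crucial step would be to derive the matrix identity
\[
H(x)^2=-H(\nabla_xF)\qquad\text{for all }x\in\mtr^m.
\]
This comes from differentiating $(\nabla_xF)^T\nabla_xF\equiv 0$ twice: a first derivative gives $H(x)\nabla_xF=0$, and a second derivative in an arbitrary direction $v$, combined with $H(a)b=H(b)a$, rearranges to $H(x)^2 v=-H(\nabla_xF)v$. Both sides are polynomial in $x$, so the identity extends to all $x\in\mtr^m\otimes\mtc$.

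To finish, I would apply both sides of the key identity to the vector $\nabla_xF$: the left-hand side vanishes because $H(x)\nabla_xF=0$ forces $H(x)^2\nabla_xF=0$, while the right-hand side equals $-H(\nabla_xF)\nabla_xF=-2\nabla_{\nabla_xF}F$ by Euler applied at the complex point $\nabla_xF$. Hence $\nabla_{\nabla_xF}F\equiv 0$. Substituting $x=\nabla_yF$ into the key identity then yields
\[
H(\nabla_yF)^2=-H(\nabla_{\nabla_yF}F)=-H(0)=0,
\]
which is what was needed. Harmonicity $\Delta[F]_y=0$ is automatic, either because $\Delta$ commutes with directional derivatives (so $\Delta[F]_y$ is a linear combination of components of $\nabla(\Delta F)$), or because a symmetric matrix squaring to zero is traceless.

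The main obstacle I anticipate is spotting the two-step bootstrap: first using the key identity together with Euler at the complex vector $\nabla_xF$ to extract the strong pointwise vanishing $\nabla_{\nabla_xF}F\equiv 0$, and then re-applying the very same identity with $x$ replaced by $\nabla_yF$ to obtain the matrix equation $H(\nabla_yF)^2=0$. The matrix identity $H(x)^2=-H(\nabla_xF)$ itself is a compact repackaging of the second-order content of horizontal conformality and is the single fact doing all the work.
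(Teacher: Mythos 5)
Your argument is correct and complete. The paper itself does not prove this proposition; it simply notes that it ``follows from the proof of Theorem 2.1 in \cite{ABB-99}'', so your write-up supplies a self-contained argument where the paper defers to a reference. Each step checks out: for $F(x)=T_{ijk}x_ix_jx_k$ the Hessian $H$ is linear with $H(a)b=H(b)a$ and $H(x)x=2\nabla_xF$; the Hessian in $x$ of $[F]_y$ is indeed the constant symmetric matrix $H(\nabla_yF)$, so by the remark following Theorem \ref{thm: complex-type} (a quadratic form $x\mapsto x^TAx$ is a harmonic morphism iff $A^2=0$, harmonicity of weakly horizontally conformal polynomials being automatic by the corollary to Theorem \ref{thm: Sn-eigen}) everything reduces to $H(\nabla_yF)^2=0$. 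Differentiating $(\nabla_xF)^T\nabla_xF\equiv0$ once gives $H(x)\nabla_xF=0$ and a second time gives $H(x)^2=-H(\nabla_xF)$, a polynomial identity that extends to complex arguments; evaluating on $\nabla_xF$ and invoking Euler at the complex point yields $\nabla_{\nabla_xF}F\equiv0$, and substituting $x=\nabla_yF$ closes the loop. The identity $H(x)^2=-H(\nabla_xF)$ is exactly the second-order content of horizontal conformality that the cited proof of Ababou--Baird--Brossard exploits, so your route is in the same spirit, but the two-step bootstrap through $\nabla_{\nabla_xF}F\equiv0$ is a clean and genuinely self-contained way to extract the conclusion; it would be a reasonable addition to the paper in place of the bare citation.
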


It is not necessarily the case that the defect of a higher degree harmonic morphism is again a harmonic morphism, let alone an eigenfamily. Nevertheless it works out in the following example:

\begin{example}
Consider
$$P:\mtc^4\to\mtc,\qquad (z,u,v,w)\mapsto z^2 vw - u^2 \overline{vw}+zu(|v|^2-|w|^2),$$
which is a degree $4$ harmonic morphism. Then the complex defects $\{[P]_y\mid y\in\mtc^4\}$ constitute an eigenfamily of degree $3$ harmonic morphisms and $\spn_\mtc\{[P]_y\mid y\in\mtc^4\}$ is given by all functions of the form
$$a(z^2w+zu\overline v)+b(zu\overline w-z^2v)+c(u^2\overline v+zuw)+d(u^2\overline w-zuv),$$
where $a,b,c,d\in \mtc$. Note that such a function is of complex type if and only if $ad-bc=0$.
\end{example}

\subsection{Gluing and augmenting along an axis of holomorphy}\label{sec: gluing}

We now give two observations on how an axis of holomorphy can be used to expand eigenfamilies and to generate new eigenfamilies. The proofs are trivial and so we omit them:

\begin{proposition}\label{prop: adjoin-hol}
Let $\mathcal F$ be $(0,0)$-eigenfamily of functions from $\mtc^k\oplus\mtr^m\to\mtc$ which are holomorphic in the $\mtc^k$ factor. For any family $\mathcal G$ of holomorphic functions $\mtc^k\to\mtc$ and $\pi:\mtc^k\oplus\mtr^m\to\mtc^k$ the orthogonal projection one has that
$$\spn_\mtc(\mathcal F)+\spn_\mtc(\pi^* \mathcal G)$$
is a $(0,0)$-eigenfamily.
\end{proposition}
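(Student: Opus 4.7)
The plan is to reduce the statement to checking the defining equations of a $(0,0)$-eigenfamily on three types of pairs, exploiting the fact that both $\Delta$ and $\kappa$ are $\mtc$-linear (respectively bilinear), so it suffices to verify $\Delta \varphi = 0$ and $\kappa(\varphi,\psi) = 0$ on elements $\varphi,\psi \in \mathcal F \cup \pi^*\mathcal G$ rather than on arbitrary linear combinations. The three cases are $(\mathcal F, \mathcal F)$, $(\pi^*\mathcal G, \pi^*\mathcal G)$, and the cross case $(\mathcal F, \pi^*\mathcal G)$.

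The first case is the hypothesis that $\mathcal F$ is a $(0,0)$-eigenfamily. The second reduces, via the chain rule together with the identities $\Delta(G \circ \pi) = (\Delta G) \circ \pi$ and $\kappa(G_1\circ\pi, G_2\circ\pi) = \kappa(G_1, G_2)\circ\pi$ (both valid since $\pi$ is an orthogonal projection onto a Euclidean subspace), to the classical observation that holomorphic functions on $\mtc^k$ trivially form a $(0,0)$-eigenfamily; indeed, the Wirtinger identities \eqref{eq: hol1} and \eqref{eq: hol2} show that $\Delta G_i$ and $\kappa(G_1,G_2)$ both vanish as soon as each $\partial_{\bar z_j} G_i$ does.

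The only step with any content is the cross case: for $F \in \mathcal F$ and $G \in \mathcal G$ one must verify $\kappa(F, G\circ\pi) = 0$. Decompose the gradient along $\mtc^k \oplus \mtr^m$ as $\nabla F = (\nabla_z F, \nabla_x F)$. Since $G\circ\pi$ depends only on the $\mtc^k$ variable, $\nabla(G\circ\pi)$ has vanishing $\mtr^m$ component, so the cross inner product reduces to $(\nabla_z F)^T\, \nabla G$, a pairing on $\mtc^k \otimes \mtc$. The key observation, already used in the proof of Theorem \ref{thm: complex-type}, is that the gradient of any function that is holomorphic on $\mtc^k$ takes values in the isotropic subspace $\{v + iJv : v \in \mtc^k\} \subset \mtc^k \otimes \mtc$; this applies to $\nabla G$ directly, and to $\nabla_z F$ by the hypothesis that $F$ is holomorphic in the $\mtc^k$ factor. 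The complex bilinear pairing of two vectors in this isotropic subspace vanishes, which concludes the case.

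There is no substantive obstacle; the whole proposition is really a bookkeeping consequence of Theorem \ref{thm: complex-type} applied in the $\mtc^k$ factor, together with the fact that $\pi$ commutes with $\Delta$ and $\kappa$ because it is an orthogonal projection. This is presumably why the authors describe the proof as trivial.
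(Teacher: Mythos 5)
Your proof is correct, and since the paper omits this proof entirely (the authors declare it trivial), your write-up supplies exactly the intended verification: reduce by (bi)linearity of $\Delta$ and $\kappa$ to the three cases of generators, note that the first two are the hypothesis on $\mathcal F$ and the standard fact that pullbacks of holomorphic families along an orthogonal projection are $(0,0)$-eigenfamilies, and kill the cross term because both $\nabla_z F$ and $\nabla G$ lie in the same isotropic (anti-holomorphic) subspace of $\mtc^k\otimes\mtc$ while $\nabla(G\circ\pi)$ has no $\mtr^m$ component. No gaps.
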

\begin{example}
Let $\mathcal F$ be the family on $\mtc^4$ spanned by
$$\mtc^4\to\mtc, \quad (z,u,v,w)\mapsto a(z^2w+zu\overline v)+b(zu\overline w-z^2v)+c(u^2\overline v+zuw)+d(u^2\overline w-zuv)$$
where $a,b,c,d\in\mtc$. Denote by $\mtc[z,u]_3$ the homogeneous degree $3$ polynomials in $z,u$, then
$$\mathcal F \oplus\mtc[z,u]_3$$
is a $(0,0)$-eigenfamily consisting of homogeneous degree $3$ polynomials.
\end{example}

\begin{proposition}\label{prop: gluing}
Let $\mathcal F$ be a $(0,0)$-eigenfamily $\mtc^k\oplus\mtr^{m_1}\to\mtc$ and $\mathcal G$ a $(0,0)$-eigenfamily $\mtc^k\oplus\mtr^{m_2}\to\mtc$, both holomorphic in the $\mtc^k$ factors. Then the following is a $(0,0)$-eigenfamily on $\mtc^k\oplus\mtr^{m_1}\oplus\mtr^{m_2}$:
$$\{(z,x,y)\mapsto P(z,x)\mid P\in\mathcal F\}\cup\{(z,x,y)\mapsto Q(z,y)\mid Q\in\mathcal G\}.$$
\end{proposition}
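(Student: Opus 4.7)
The plan is to verify directly the two defining equations of a $(0,0)$-eigenfamily for the given family on $\mtc^k\oplus\mtr^{m_1}\oplus\mtr^{m_2}$. Write $\widetilde P(z,x,y)\defeq P(z,x)$ for $P\in\mathcal F$ and $\widetilde Q(z,x,y)\defeq Q(z,y)$ for $Q\in\mathcal G$. Since the Laplacian on a Riemannian product of Euclidean factors is just the sum of the Laplacians in each factor, $\Delta\widetilde P=\Delta P=0$ on $\mtc^k\oplus\mtr^{m_1}$, and similarly $\Delta\widetilde Q=0$. This disposes of the Laplacian condition.

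For the conformality operator, I would split into three cases. For two elements $\widetilde P_1,\widetilde P_2$ coming from $\mathcal F$, the $\mtr^{m_2}$-components of both gradients vanish identically, so $\kappa(\widetilde P_1,\widetilde P_2)(z,x,y)=\kappa(P_1,P_2)(z,x)=0$, using that $\mathcal F$ is itself an eigenfamily. The case of two elements from $\mathcal G$ is symmetric.

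The only point that uses the holomorphy hypothesis is the mixed case $\kappa(\widetilde P,\widetilde Q)$. Here $\nabla \widetilde P$ has no $\mtr^{m_2}$-component and $\nabla \widetilde Q$ has no $\mtr^{m_1}$-component, so the complex bilinear pairing reduces to the inner product of the $\mtc^k$-components:
$$
(\nabla_{(z,x,y)}\widetilde P)^T\nabla_{(z,x,y)}\widetilde Q = (\nabla_z P)^T\nabla_z Q.
$$
Since $P$ and $Q$ are both holomorphic in $z$, formula \eqref{eq: hol1} (or equivalently Theorem \ref{thm: complex-type}, applied to the family of $z$-slices $\{P(\cdot,x):\mtc^k\to\mtc\}_{x\in\mtr^{m_1}}\cup\{Q(\cdot,y):\mtc^k\to\mtc\}_{y\in\mtr^{m_2}}$, which is uniformly of complex type) forces the right-hand side to vanish identically.

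There is no real obstacle: each of the three cases is a one-line computation, and the mixed case is exactly the one for which the holomorphy assumption on the $\mtc^k$ factor was built in. This matches the author's remark that the proof is trivial.
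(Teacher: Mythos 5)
Your proof is correct, and since the paper explicitly omits the proof of Proposition~\ref{prop: gluing} as trivial, your direct verification is exactly the intended argument: the Laplacian and the two unmixed conformality cases are inherited from $\mathcal F$ and $\mathcal G$, and the mixed case reduces to the pairing of the $\mtc^k$-gradients of two functions holomorphic in $z$, which vanishes by equation~(\ref{eq: hol1}). No gaps.
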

\begin{example}
Let $P,Q:\mtc^2\oplus\mtc^2\to\mtc$ be given by $P(z,u,v,w)=zv+uw$, $Q(z,u,v,w)=z\overline w -u\overline v$, $\mathcal F=\mathcal G=\{P,Q\}$. Gluing these as in Proposition~\ref{prop: gluing} gives an eigenfamily on $\mtc^6$ spanned by the functions:
\begin{align*}
P_1(z,u,v_1w_1,v_2,w_2)=zv_1+uw_1,& & Q_1(z,u,v_1w_1,v_2,w_2) = z\overline{w_1}-u\overline{v_1},\\
P_2(z,u,v_1w_1,v_2,w_2)=zv_2+uw_2,& & Q_2(z,u,v_1w_1,v_2,w_2) = z\overline{w_2}-u\overline{v_2}.
\end{align*}
\end{example}

\begin{remark}
Taking powers of $(0,0)$-eigenfamilies and Propositions~\ref{prop: adjoin-hol}, \ref{prop: gluing} are methods with which one can easily generate large amounts of $(0,0)$-eigenfamilies from previously known ones. It seems however quite difficult to find new $(0,0)$-eigenfamilies that do not arise in this way from known families. This suggests that the question of classifying polynomial $(0,0)$-eigenfamilies has some rigidity, which should be most visible in low dimensions and degrees.
\end{remark}

%
%
%

\bibliographystyle{amsplain}
\bibliography{my}

\end{document}